\theoremstyle{definition}
\newtheorem{theorem}{Theorem}[section]
\newtheorem*{theorem*}{Theorem}
\newtheorem{construction}[theorem]{Construction}
\newtheorem{corollary}[theorem]{Corollary}
\newtheorem{example}[theorem]{Example}
\newtheorem{lemma}[theorem]{Lemma}
\newtheorem{proposition}[theorem]{Proposition}
\newtheorem{remark}[theorem]{Remark}
\newtheorem{question}[theorem]{Question}
\newtheorem{definition}[theorem]{Definition}
\newtheorem*{ackn}{Acknowledgements}
\theoremstyle{plain}
\newcommand{\R}{\mathbb{R}}
\newcommand{\Z}{\mathbb{Z}}
\newcommand{\N}{\mathbb{N}}
\DeclareMathOperator{\id}{id}
\DeclareMathOperator{\PL}{PL}
\DeclareMathOperator{\supp}{supp}
\DeclareMathOperator{\Homeo}{Homeo}
\DeclareMathOperator{\Diff}{Diff}
\DeclareMathOperator{\interior}{Int}
\title{Strong distortion in transformation groups}
\author{Fr\'ed\'eric Le Roux and Kathryn Mann}
\date{}
\begin{document}

\maketitle

\abstract{We show that the groups $\Diff^r_0(\R^n)$ and $\Diff^r(\R^n)$ have the \emph{strong distortion property}, whenever $0 \leq r \leq \infty, r \neq n+1$. This implies in particular that every element in these groups is distorted, a property with dynamical implications. The result also gives new examples of groups with Bergman's strong boundedness property as in \cite{Bergman}.  
With related techniques we show that, for $M$ a closed manifold or homeomorphic to the interior of a compact manifold with boundary, the diffeomorphism groups $\Diff_0^r(M)$ satisfy a relative Higman embedding type property, introduced by Schreier.  In the simplest case, this answers a problem asked by Schreier in the famous \emph{Scottish Book}.   \\
\\
MSC classes: 22F05, 37C05, 57S25, 20F05

}

\section{Introduction}
\sloppy

It is a classical theorem of Higman, Neumann, and Neumann \cite{HNN} that every countable group can be realized as a subgroup of a group generated by two elements.   In this paper, we are concerned with a relative version of this property, inspired by the following question of Schreier.  

\begin{question}[Schreier (1935), Problem 111 in the \emph{Scottish Book} \cite{Scottish}] 
Does there exist an uncountable group with the property that every countable sequence of elements of this group is contained in a subgroup which has a finite number of generators? In particular, does the group $S_\infty$ of permutations of an infinite set, and the group of all homeomorphisms of the interval have this property?
\end{question} 

The first part of this question was answered positively, and using the example of $S_\infty$, by Galvin \cite{Galvin}, although the existence of such a group also follows easily from the earlier work of Sabbagh in \cite{Sabbagh}.  A few other examples of groups with this property have been found, see eg. \cite{Cor} and references therein.  
However, as of the 2nd (2015) edition of the Scottish book, the question concerning the group of homeomorphisms of the interval remains open.  
Here we give a positive answer to Schreier's question for the group of homeomorphisms of the interval, and show that the property in question holds for many other transformation groups as well.  For concreteness, say that a group $G$ has the \emph{Schreier property} if every countable subset of $G$ is contained in a finitely generated subgroup of $G$.  We prove: 

\begin{theorem} \label{schrier-thm} 
Let $0 \leq r \leq \infty$, and let $M$ be a $C^r$ manifold with $\dim(M) \neq r-1$, either closed or homeomorphic to the interior of a compact manifold with boundary.   Then the group $\Diff^r_0(M)$ of isotopically trivial diffeomorphisms of $M$ has the Schreier property.  

Consequently, the group $\Diff^r(M)$ has the Schreier property if and only if the mapping class group $\Diff^r(M)/\Diff^r_0(M)$ is finitely generated.  
\end{theorem}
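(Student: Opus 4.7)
The strategy is to leverage the strong distortion of $\Diff^r_0(\R^n)$ established in the paper's main result---which trivially implies the Schreier property for $\Diff^r_0(\R^n)$---and propagate it to general $M$ via a fragmentation argument. The second statement will then follow from a standard group-extension argument, using that under our hypotheses the mapping class groups in question are countable.

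First, I would fix a finite cover $\{U_1, \ldots, U_k\}$ of $M$ by open sets each diffeomorphic to $\R^n$. For closed $M$ this uses compactness; for $M$ homeomorphic to the interior of a compact manifold with boundary $\bar M$, a finite good cover of $\bar M$ (obtained from a handle decomposition) restricts to such a cover of $M$. Next, I would establish the fragmentation lemma: any $f \in \Diff^r_0(M)$ can be written as $f = g_1 \cdots g_k$ with $g_i \in \Diff^r(M)$ compactly supported in $U_i$. For closed $M$ this is classical Palais-type fragmentation applied to an isotopy from the identity to $f$; for open $M$ it requires additional care near the ends, using a collar of $\partial \bar M$ to ensure the pieces remain compactly supported in the charts. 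Given a countable family $\{f^{(n)}\}_n \subset \Diff^r_0(M)$, fragmenting each element and observing that $\{g_i^{(n)}\}_n$ is a countable subset of $\Diff^r_c(U_i) \cong \Diff^r_c(\R^n)$, the paper's strong distortion argument in the compactly supported setting produces a finitely generated subgroup $H_i \le \Diff^r_0(M)$ containing these elements. Then $\langle H_1, \ldots, H_k \rangle$ is a finitely generated subgroup of $\Diff^r_0(M)$ containing every $f^{(n)}$, proving the Schreier property.

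For the second statement, suppose $Q := \Diff^r(M)/\Diff^r_0(M)$ is finitely generated, and let $s_1, \ldots, s_l \in \Diff^r(M)$ be lifts of generators. Any countable family $\{f^{(n)}\} \subset \Diff^r(M)$ can be written $f^{(n)} = w_n h^{(n)}$ with $w_n$ a word in the $s_i$ and $h^{(n)} \in \Diff^r_0(M)$; applying the first part to $\{h^{(n)}\}_n$ yields a finitely generated $H \le \Diff^r_0(M)$, so $\langle s_1, \ldots, s_l, H \rangle$ is the desired finitely generated subgroup. Conversely, under the theorem's hypotheses $\Diff^r(M)$ is separable, so the mapping class group $Q$ is countable; applying the Schreier property of $\Diff^r(M)$ to a set of coset representatives then yields a finitely generated subgroup whose image exhausts $Q$.

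The main anticipated obstacle is the fragmentation lemma in the open case, where an isotopy from the identity to $f$ need not be compactly supported near the ends of $M$, so arranging fragments with compact support in each $U_i$ requires careful use of the collar structure on $\bar M$. A secondary technical point is verifying that the strong distortion argument for $\Diff^r_0(\R^n)$ yields the Schreier property for the subgroup $\Diff^r_c(\R^n)$, since it is the compactly supported group that appears naturally after fragmentation.
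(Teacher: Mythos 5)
There is a genuine gap, and it sits exactly at the point you defer as a ``secondary technical point.'' Your plan for the closed case is to fragment into charts $U_i\cong\R^n$ and then invoke the strong distortion of $\Diff^r_0(\R^n)$ for the countable family $\{g_i^{(n)}\}_n\subset\Diff^r_c(U_i)$. But the finite set produced by the $\R^n$ argument (the maps $\hat S,\hat T,\phi,A,B,\dots$ of Theorem \ref{theo.diff.R}) consists of diffeomorphisms that are \emph{not} compactly supported: their supports are infinite unions of annuli escaping to infinity, i.e.\ accumulating on the frontier of $U_i$ in $M$. Such maps do not extend by the identity to elements of $\Diff^r_0(M)$ (not even continuously in general, and certainly not $C^r$), so the subgroups $H_i$ you want do not live in $\Diff^r_0(M)$. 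Nor is there a ``compactly supported version'' of the distortion result to fall back on: as noted in the paper's introduction, $\Diff^r_c$ of a noncompact manifold has countable cofinality and hence \emph{fails} the Schreier property, and for $r\ge 1$ is not even strongly bounded. What is true --- and what the paper proves --- is the weaker statement that a sequence supported in a ball $B\subset M$ is contained in a finitely generated subgroup of $\Diff^r_0(M)$, but its proof is genuinely different: fragmentation reduces to such sequences, Mather--Thurston simplicity plus the affine-subgroup Lemma \ref{affine lem} and Corollary \ref{affine cor} let one write each fragment as a power of a commutator of elements of arbitrarily small $C^r$ norm, and the convergence Lemma \ref{avila-lem} (infinite products of conjugates with norms tending to $0$) builds the generators $A,B,S,T$ inside $\Diff^r_0(M)$ itself. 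The smallness of norms is the substitute for the ``push supports to infinity'' trick that is only available on $\R^n$; also note the logical order is the reverse of your plan, since the closed case is proved without Theorem \ref{distorted-thm}, while the noncompact case uses the closed case.

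The open case has a second, independent problem: your fragmentation claim is false. A finite product of diffeomorphisms, each compactly supported in some chart, is itself compactly supported, so no amount of care with a collar of $\partial\bar M$ will write an $f\in\Diff^r_0(M)$ that is nontrivial at infinity (e.g.\ a translation of $\R$) in the form $g_1\cdots g_k$ with each $g_i$ compactly supported in a chart. The paper instead decomposes $f_n=k_ng_nh_n$ with $k_n$ compactly supported and $g_n,h_n$ supported on infinite, locally finite unions of ``annular'' sets $\partial M\times I$ near the ends (Lemma \ref{isotopy-lem}, via Edwards--Kirby in the $C^0$ case), then treats $g_n,h_n$ with the $\R^n$-type machinery (Lemma \ref{lemma.avila.sets} together with the Burago--Ivanov--Polterovich commutator bound) and $k_n$ by the closed-case Proposition \ref{schrier-compact}. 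Your argument for the second statement about the mapping class group (extension in one direction, countability of the quotient in the other) is essentially the paper's and is fine once the first statement is in place.
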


The answer to Schreier's question is the special case $\Diff^0(\R) = \Homeo(\R) \cong \Homeo(I)$.
The assumption $\dim(M) \neq r-1$ in this theorem comes from the fact that the group $\Diff_c^r(M)$ of compactly supported diffeomorphisms of a manifold $M$ is known to be simple in this case, but the algebraic structure of $\Diff_c^r(M)$ is not understood when $\dim(M) = r-1$.  In particular, it is not known whether such a group admits a surjective homomorphism to $\R$, if so, it would fail to have the Schreir property.  

In many cases, it turns out that Schreier's property follows from a stronger dynamical property called \emph{strong distortion}.

\begin{definition}
A group is \emph{strongly distorted}\footnote{We follow the terminology of Cornulier. This property was called ``Property P" in \cite{Khelif}.}
 if there exists an integer $m$ and an integer-valued sequence $w_n$ such that, for every sequence $g_n$ in $G$, there exists a finite set $S$ of cardinality $m$, such that each element $g_n$ can be expressed as a word of length $w_n$ in $S$.  
\end{definition}

In particular, strong distortion implies that every element of $G$ is \emph{arbitrarily distorted} in the usual sense of distortion of group elements or subgroups.  This fact has important dynamical implications when $G$ is a group of homeomorphisms or diffeomorphisms of a manifold or more general metric space, as distortion places constraints on the dynamics of such transformations.  For example, the case of distorted diffeomorphisms of surfaces is studied in \cite{FH}.  

Closely related to strong distortion are the notions of \emph{strong boundedness}, also called \emph{property (OB)} or the \emph{Bergman property}, and \emph{uncountable cofinality}.  

\begin{definition}
A group $G$ is \emph{strongly bounded} if every function $\ell: G \to \R_{\geq 0}$, satisfying $\ell(g^{-1}) = \ell(g)$, $\ell(\id) = 0$, and the triangle inequality $\ell(gh) \leq \ell(g) + \ell(h)$, is bounded.
\end{definition}

\begin{definition}
A group $G$ has \emph{uncountable cofinality} if it cannot be written as the union of a countable strictly increasing sequence of subgroups. 
\end{definition}

It is not hard to see that the Schreier property implies uncountable cofinality, that strong distortion implies both strong boundedness and the Schreier property (we give quick proofs at the end of this introduction), and that strong boundedness is equivalent to the dynamical condition that every isometric action of $G$ on a metric space has bounded orbits (see the appendix to \cite{CF}).   Our second main result is the following.  

\begin{theorem} \label{distorted-thm}
The groups $\Diff^r_0(\R^n)$ and $\Diff^r(\R^n)$ are strongly distorted, for all $n$ and all $r \neq n+1$.   
\end{theorem}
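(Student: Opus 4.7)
The plan is to prove strong distortion directly: for any sequence $(g_k) \subset \Diff^r_0(\R^n)$, I aim to exhibit a finite set $S \subset \Diff^r_0(\R^n)$ with $|S| = m$ independent of the sequence, and integer word lengths $w_k$ also independent of the sequence, such that each $g_k$ is a word of length $w_k$ in $S$. The strategy is to compile the whole sequence into a single diffeomorphism supported on a disjoint union of bounded regions escaping to infinity, and then use a fixed shift and auxiliary elements to extract each $g_k$.

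Step 1 (Reduction to compact support). First I would reduce to sequences of diffeomorphisms each supported in a common compact ball $K \subset \R^n$. A general element of $\Diff^r_0(\R^n)$ need not be compactly supported (it may be a translation), so this requires a uniform fragmentation: every $g \in \Diff^r_0(\R^n)$ admits a decomposition $g = f_1 \cdots f_N$ with $N$ fixed and each $f_i$ compactly supported, modulo conjugation by a fixed list of normalizing diffeomorphisms. The exclusion $r \neq n+1$ enters here, as fragmentation for $\Diff^r_c(\R^n)$ relies on the simplicity/perfectness of this group, which is available precisely when $r \neq n+1$.

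Step 2 (Compilation). Choose a translation $h \in \Diff^r_0(\R^n)$ such that the translates $K_k := h^k(K)$ are pairwise disjoint and escape to infinity. For a sequence $(g_k)$ supported in $K$, set
$$G := \prod_{k \geq 0} h^k g_k h^{-k},$$
which is a well-defined $C^r$ diffeomorphism of $\R^n$ because the supports $K_k$ are pairwise disjoint and non-accumulating. This single element encodes the entire sequence: on $K_k$ it acts as a translate of $g_k$.

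Step 3 (Extraction). The generating set will be $S = \{h, G\}$ together with one or two auxiliary diffeomorphisms. The group generated by $S$ already contains $h^{-k} G h^k$, which acts as $g_k$ on $K$; but it also acts nontrivially on $K_{j-k}$ for $j \neq k$, so it is not literally equal to $g_k$ as an element of $\Diff^r(\R^n)$. To isolate the $K$-piece I would introduce a carefully designed auxiliary diffeomorphism $\phi$ supported near $K$ and disjoint from $K_m$ for $m \neq 0$; combining conjugates of $G$ by powers of $h$ with $\phi$-commutators, arranged so that the extraneous pieces on distant $K_{j-k}$ cancel out by telescoping, should yield $g_k$ exactly. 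If a single commutator $[\phi, h^{-k} G h^k] = [\phi, g_k]$ only recovers a commutator rather than $g_k$, the remedy is to introduce a second compilation (for instance placing $g_k$ on even-indexed translates $K_{2k}$ and reserving odd-indexed translates for a fixed ``witness'' diffeomorphism), so that the group operations produce $g_k$ directly. The resulting word length grows linearly in $k$.

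The main obstacle is Step 3: arranging the extraction with only a bounded number of generators and uniform word-length bounds. Naive conjugation reveals $g_k$ on $K$ but pollutes every $K_j$ with shifted copies of $g_{k+j}$, and these copies change as $k$ varies; the challenge is to cancel them uniformly using a fixed auxiliary generator that does not depend on the sequence. This is the technical heart of the argument and requires carefully exploiting the disjointness of the $K_j$ together with the translation-equivariant structure coming from $h$. A secondary (but more standard) difficulty is establishing the uniform fragmentation in Step 1 for non-compactly supported elements; this relies on classical structural results about $\Diff^r_c(\R^n)$ and is exactly where the hypothesis $r \neq n+1$ is invoked.
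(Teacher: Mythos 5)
Your Step 1 cannot work as stated: a product of boundedly many compactly supported diffeomorphisms is compactly supported, and conjugating by a fixed list of normalizers does not change this, so a translation (or any element of $\Diff^r_0(\R^n)$ that moves points near infinity) can never be reduced to the compactly supported case by any ``uniform fragmentation.'' The paper handles this by a genuinely different decomposition: each $f_n$ is written as $f_n = k_n g_n h_n$ where only $k_n$ is compactly supported, while $g_n$ and $h_n$ are supported on two fixed locally finite unions $X$, $Y$ of disjoint concentric annuli marching off to infinity; producing $h_n$ (a diffeomorphism supported on the annuli that agrees with $f_n$ there) requires the annulus theorem (Kirby--Quinn in the $C^0$ case), which is absent from your outline. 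Also, your explanation of where $r\neq n+1$ enters is off: fragmentation for $\Diff^r_c$ is elementary for all $r$; the hypothesis is needed for perfectness, which feeds into the Burago--Ivanov--Polterovich theorem that every compactly supported diffeomorphism of $M'\times\R^{n-1}$ is a product of \emph{two} commutators --- and that uniform bound is what makes the word lengths $w_k$ independent of the sequence.

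Your Step 2 (compiling the sequence into one infinite product with supports escaping to infinity, so that $C^r$ regularity survives) is exactly the right idea and matches the paper. But Step 3, which you yourself flag as the obstacle, is where the proof actually lives, and the telescoping/witness scheme you sketch is not a proof: with a single shift $h$, the conjugate $h^{-k}Gh^k$ carries junk on infinitely many translates depending on the whole tail of the sequence, and no fixed auxiliary $\phi$ with commutators will cancel it with uniformly bounded word length. The paper's mechanism is different: first write each piece as a product of two commutators $[a_n,b_n][a'_n,b'_n]$ (BIP), then build \emph{two} shifts $\hat S,\hat T$ and regions $\hat I_k$ such that all translates $\hat S^i\hat I_{k_1}$, $\hat T^j\hat I_{k_2}$ are pairwise disjoint except at the base and form a locally finite family (this is the delicate Lemma on the line, realized on $\R^n$ via polar coordinates). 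Compiling the $a_n$ along $\hat S$ into $A$ and the $b_n$ along $\hat T$ into $B$, the junk of $\hat S^{-n}A\hat S^n$ and of $\hat T^{-n}B\hat T^n$ has disjoint support, so it commutes and cancels identically in the commutator, giving $[\hat S^{-n}A\hat S^n,\hat T^{-n}B\hat T^n]=[a_n,b_n]$ exactly, with word length linear in $n$. Without (i) the two-commutator bound and (ii) the two-shift disjointness construction, your extraction step has no working cancellation, so the argument as proposed has a genuine gap at both Step 1 and Step 3.
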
 

This is particularly surprising since $\Diff_c^r(\R^n)$, as well as the groups $\Diff^r_0(M)$ for compact $M$, are never strongly distorted, nor even strongly bounded, whenever $r \geq 1$.  This is also true of $\Diff^0_0(M) = \Homeo_0(M)$ provided that $M$ has infinite fundamental group -- this follows from \cite[Example 6.8]{CF}, or more explicitly from \cite[Prop. 20]{MR} which implies that all \emph{maximal metrics} on $\Homeo_0(M)$ are unbounded length functions.    
In particular, for these examples, there is no hope to improve Theorem \ref{schrier-thm} to a proof of strong boundedness or distortion.    

Interestingly the question of strong boundedness and strong distortion for homeomorphism groups of manifolds with \emph{finite} fundamental group, other than the spheres, remains open.   

The following table summarizes the results mentioned above.  

\bigskip

\begin{center}
    \begin{tabular}{|p{4.1cm} | p{1.5cm} | p{1.5cm} | p{1.5cm}| p{2cm}|}
    \hline
     & Strongly distorted & Strongly bounded & Schreier property & Uncountable cofinality \\ \hline
    $\Diff^r_0(\R^n)$, $r \neq n+1$ & $\checkmark$ & $\checkmark$ & $\checkmark$ & $\checkmark$
     \\ \hline
    $\Homeo_0(S^n)$  & $\checkmark$ & $\checkmark$ & $\checkmark$ & $\checkmark$
    \\
    (Cornulier \cite[Appendix]{CF}) & & & &
     \\ \hline
    $\Homeo_0(M)$, $|\pi_1(M)| < \infty$  & ?  & ? & $\checkmark$ & $\checkmark$
     \\ \hline
      $\Homeo_0(M)$, $|\pi_1(M)| = \infty$  & X\, \cite{CF}, \cite{MR}& X & $\checkmark$ & $\checkmark$
     \\ \hline
    $\Diff^r_0(M)$ $r\geq 1$, & X & X & $\checkmark$* & $\checkmark*$
    \\ 
     $M$ compact & & & &
     \\ \hline
    \end{tabular}
\end{center} 
 *under the hypotheses of Theorem \ref{schrier-thm}

\bigskip

Despite the results mentioned above, one should not expect that most transformation groups have Schreier's property.  For instance, we have the following.

\begin{example}[Failure of Schreier's property]
The group $\PL(M)$ of piecewise-linear homeomorphisms of a $\PL$ manifold $M$ does not have the Schreier property.  To see this directly, fix a system of $\PL$ charts for $M$, and note that for any finite symmetric set $S \subset G$, the set of all jacobians (at all points where defined) of elements of $S$ is a finite subset, say $F \subset \mathrm{GL}(n, \R)$.  Thus, for any element $g$ generated by $S$ and any point $x \in M$, the jacobian of $g$ at $x$ has each entry an algebraic expression in the (finite) set of entries of elements of $F$.  Thus, if $g_n$ agrees with dilation by $\lambda_n$ near some fixed point $x$, where $\lambda_n$ is a sequence of algebraically independent real numbers, then the sequence $\{g_n\}$ cannot be generated by any finite set.

As an easier example, suppose $G$ is the group of compactly supported homeomorphisms or diffeomorphisms of a noncompact manifold $M$.  Let $K_n$ be an exhaustion of $M$ by compact sets, with $K_n$ contained in the interior of $K_{n+1}$.  Then $G$ is the countable increasing union of the subgroups $G_n := \{g : g(x)=x \text{ for all } x \notin K_n\}$.  Thus, $G$ has countable cofinality, and hence does not have the Schreier property.
\end{example}

\begin{example}[Open question]
We do not know whether either of the groups $\Homeo_0(\mathbb{S}^2, \mathrm{area})$ or $\Diff^r_0(\mathbb{S}^2, \mathrm{area}), r \geq 1$ of area preserving homeomorphisms or diffeomorphism of the sphere have the Schreier property.  We do know that they are not strongly bounded.  In the case of diffeomorphisms, this follows from the fact that norm of the derivative gives an unbounded length function.  However, there is also another (conjugation-invariant) norm, the \emph{Viterbo norm} on $\Diff_0(\mathbb{S}^2, \mathrm{area})$, and by work of \cite{Seyfaddini} it extends to a norm on $\Homeo_0(\mathbb{S}^2, \mathrm{area})$.

On the other hand, the groups $\Diff^r_0(\mathbb{T}^2, \mathrm{area})$ do \emph{not} have Schreier's property. Indeed, the rotation number of the area is a morphism from these groups to $\mathbb{R}$; if a group $G$ has Schreier's property then it is also the case of its image under a morphism; and $\mathbb{R}$ does not have Schreier's property. The question is again open if we restrict to the kernels of these morphisms (that is, to the groups of \emph{Hamiltonian} diffeomorphisms or homeomorphisms).
\end{example}

\begin{remark}[A stronger property] 
As pointed out by G. Bergman, the proof of Theorem \ref{distorted-thm} shows that the group $G = \Diff^r_0(\R^n)$ has a stronger property; namely the following:  {\em There is an integer $m$ and a sequence $W_N$ of words in $m$ letters (elements of the free group on m generators) with the property that, for any sequence $\{f_n\}$ in $G$, there exists $s_1, ... , s_m \in G$ such that $f_n = W_n(s_1, s_2, ... s_m)$.  }

Bergman asks if this property is equivalent to strong boundedness.  We do not know a counterexample.  
\end{remark} 

\paragraph{Implications between properties.} 
We conclude these introductory remarks with some implications between properties that are not evident from the table given above.  Further discussion of these and related properties can be found in \cite{Bergman}, and, in the context of topological groups, also \cite[Sect 4.E]{CH}.  
\medskip 

\noindent \textit{Strong boundedness and uncountable cofinality do not imply Schreier.}
This comes from the following example of a group with the strong boundedness property, due to Cornulier \cite{Cor}.
\begin{example} \label{finite ex}
Let $G$ be a finite, simple group, and let $H$ be the infinite direct product of countably many copies of $G$.   It is shown in \cite{Cor} that such a group $H$ is strongly bounded.  
We show that  $H$ does not have the Schreier property.  
Let $S = \{s_1, ..., s_k\}$ be a finite subset of $H$, and write $s_i = (s_{i,1}, s_{i,2}, ...)$ where $s_{i,j} \in G$.  Since $G$ is finite, there exists $g_1 \in G$ such that $s_{1, j} = g_1$ for infinitely many $j$.  Passing to a further infinite subset of indices, we can find $g_2 \in G$ such that $s_{2,j} = g_2$ and $s_{1, j} = g_1$ for all such $j$.  Similarly, one finds $g_1, g_2, ... g_k$ such that $s_{i, j} = g_i$ holds for each $i$ for infinitely many $j$.   Thus any word in the generators projects to the same element of $G$ in all of these infinitely many places.  In particular, a sequence such as $(g, \id, \id, \id ...), (\id, g, \id, \id, ...), (\id, \id, g, \id ...)$ where $g \neq \id \in G$, cannot be written as a word in $S$.  
\end{example}

Since every strongly bounded group has uncountable cofinality (see \cite[Remark 4.E.11]{CH}), Example \ref{finite ex} also gives an example of a group with uncountable cofinality that fails to have Schreier's property. 
\medskip

\noindent \textit{Strong distorsion implies strong boundedness.} 
Assume that $G$ is a strongly distorted group.  That $G$ has the Schreier property is immediate from the definition.  For strong boundedness, suppose for contradiction that $\ell$ is an unbounded length function on $G$.  Let $g_n$ be a sequence of elements in $G$ such that $\ell(g_n) > w_n^2$, where $w_n$ is the sequence given by the definition of strong distortion.  Then there is a finite set $S$ such that $g_n$ can be written as a word of length $w_n$ in $S$.  However, this implies that $\ell(g_n) \leq Kw_n$, where $K = \max \{ \ell(s) \mid s \in S\}$, giving a contradiction. 
\medskip

\noindent \textit{Schreier implies uncountable cofinality}
We show the contrapositive.  Suppose that $G_1 \subsetneq G_2 \subsetneq G_3 ... $ is an increasing union of subgroups with $\bigcup_n G_n = G$.  Choose $f_n \in G_n \setminus G_{n-1}$.  If $S \subset G$ is any finite set, then there is a maximum $i$ such that $S \cap G_i \neq \emptyset$, hence $S \subset G_i$ and does not generate $\{f_n\}$.

\paragraph{Contents and outline of paper.}
Section \ref{homeo-section} gives a direct proof of strong distortion for $\Homeo_0(\R)$, and therefore a quick answer to Schreier's question.   In Section \ref{diff-sec} we introduce further technical tools to prove Theorem \ref{schrier-thm} for closed manifolds.  The proofs of strong distortion of $\Diff^r(\R^n)$ and Theorem \ref{schrier-thm} are given in Sections \ref{Rn-sec} and \ref{noncompact-sec} respectively.

\begin{ackn}
The authors thank G. Bergman and Y. de Cornulier for comments, and Y.C. for pointing out Corollary \ref{LO cor}.  
K. Mann was partially supported by NSF award DMS-1606254.
\end{ackn}

\section{Strong distortion for $\Homeo(\R)$}  \label{homeo-section}

The purpose of this section is to give a quick answer to Schreier's question, and introduce some strategies to be used later in the proof of Theorem \ref{distorted-thm}.   Note that strong distortion is inherited from finite index subgroups, so it suffices to work with the index two subgroup of \emph{orientation-preserving} homeomorphisms of the interval, $\Homeo_0(\R)$.  

Given a generating set $S$ for a group $G$, word length of $g \in G$ with respect to $S$ is denoted $\ell_S(g)$.  

\begin{proposition}[Strong distortion for $\Homeo_0(\R)$] \label{homeo-R prop}
Given a sequence $\{f_n\} \subset \Homeo_0(\R)$, there exists a set $S \subset \Homeo_0(\R)$ with $|S| = 10$, such that $\ell_S(f_n) \leq 14n+12$ holds for all $n$.  
\end{proposition}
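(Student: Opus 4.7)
The plan is to exhibit $10$ elements $S = \{T, H_1, \ldots, H_k, \nu_1, \ldots\} \subset \Homeo_0(\R)$ together with, for each $n$, a word of length at most $14n+12$ in $S$ that evaluates to $f_n$. The element $T$ will be a ``shift'' homeomorphism of $\R$; the linear-in-$n$ factor in the word length bound comes from conjugation by $T^{\pm(n-1)}$, while the remaining generators simultaneously encode the entire sequence $(f_n)$ at bounded cost.

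Concretely, I would first fix a sequence of pairwise disjoint closed intervals $J_n \subset \R$, $n \geq 1$, and an element $T \in \Homeo_0(\R)$ mapping $J_n$ onto $J_{n+1}$. For each $n$, I fix a homeomorphism $\alpha_n \colon \R \to J_n^{\circ}$ chosen so that $\alpha_{n+1} = T \alpha_n$; conjugation by $\alpha_n$ then ``compresses'' any element of $\Homeo_0(\R)$ into a homeomorphism of $\R$ supported in $J_n$. Next, using a uniform decomposition of $\Homeo_0(\R)$---for instance, a fragmentation statement combined with the perfectness-based fact that every element is a bounded-length product of commutators---I write each $f_n$ as a fixed-length product $f_n = p_n^1 \cdots p_n^k$ where $k$ does not depend on $n$. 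For each $j \leq k$, I then construct a single universal generator $H_j \in \Homeo_0(\R)$ whose restriction to each interval $J_n$ is $\alpha_n p_n^j \alpha_n^{-1}$; this is possible by defining $H_j$ piecewise on the disjoint (endpoint-fixing) intervals $J_n$ and extending by the identity. By the chosen compatibility of $T$ with the $\alpha_n$, the conjugate $T^{-(n-1)} H_j T^{n-1}$ restricted to $J_1$ equals the compressed copy $\alpha_1 p_n^j \alpha_1^{-1}$.

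The remaining and main step is to extract this $J_1$-restriction from the extraneous action of $T^{-(n-1)} H_j T^{n-1}$ on the other intervals $J_m$, $m \neq 1$, and then to unfold from $J_1$ back to $\R$. I plan to adjoin to $S$ a small fixed collection of generators $\nu_i$ supported so that a bounded-length algebraic combination of the $\nu_i$ with $T^{-(n-1)} H_j T^{n-1}$ isolates the $J_1$-component, after which a further bounded-length conjugation by an element playing the role of $\alpha_1^{-1}$ returns $p_n^j$ as an honest element of $\Homeo_0(\R)$. The key flexibility that enables the encoding is that $\R$ is homeomorphic to any bounded open interval, so no information is lost in the compression step. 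The main obstacle is primarily combinatorial: organizing the masking and unfolding so that the total contribution for each of the $k$ factors $p_n^j$ consists of a $T^{\pm(n-1)}$ conjugation (contributing $2(n-1)$ letters) plus a bounded constant overhead, and so that the aggregate word representing $f_n = p_n^1 \cdots p_n^k$ stays within $14n+12$ letters drawn from a generating set of size $10$.
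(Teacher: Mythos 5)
There is a genuine gap, and it sits at the heart of your scheme: the ``decompression'' step cannot be performed inside the group. Your maps $\alpha_n \colon \R \to J_n^{\circ}$ are not elements of $\Homeo_0(\R)$, and conjugation by $\alpha_1$ is not an inner automorphism; there is no element of $\Homeo_0(\R)$ ``playing the role of $\alpha_1^{-1}$''. Indeed, if $\beta \in \Homeo_0(\R)$ and $q$ is supported in $J_1$, then $\beta q \beta^{-1}$ is supported in the compact set $\beta(J_1)$; more generally, any word in finitely many fixed generators together with the extracted element $\alpha_1 p_n^j \alpha_1^{-1}$ can only reproduce $p_n^j$ if the bookkeeping of supports allows it, and a single bounded-length conjugation certainly cannot turn a compactly supported homeomorphism into one with unbounded support. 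The natural repair --- arrange that each factor $p_n^j$ is compactly supported --- is impossible: a finite product of compactly supported homeomorphisms is compactly supported, so no ``fixed-length fragmentation plus bounded commutator length'' statement can decompose, say, a sequence of translations $f_n(x)=x+n$ into boundedly many compactly supported pieces. So as written the argument cannot handle any $f_n$ whose germ at infinity is nontrivial, which is exactly the hard part of the proposition. (A secondary, fixable issue: even on the compactly supported part, isolating the $J_1$-component of $T^{-(n-1)}H_jT^{n-1}$ by a bounded-length word is not just combinatorial bookkeeping; in the standard Fisher-type trick the cancellation must be built into the generator itself by taking a doubly indexed infinite product, as in Construction \ref{fisher-construction}.)

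Compare with the paper's route, which is designed precisely to avoid compressing $\R$ into an interval. Lemma \ref{kgh-lemma} factors $f_n = g_n h_n k_n$ where only $k_n$ is compactly supported, while $g_n$ and $h_n$ are supported in two fixed locally finite unions of disjoint intervals $X$ and $Y$ chosen adaptively from the sequence $\{f_n\}$; these last two factors absorb the behavior at infinity. Each of the three families is then encoded ``in place'' by Construction \ref{fisher-construction} (an infinite product of conjugates by powers of displacement maps $S$, $T$ supported near $X$, near $Y$, or near a fixed compact interval), and only the compactly supported $k_n$ are first pushed into a fixed interval by conjugating with a single group element $d$ --- which is the legitimate, group-internal version of your compression idea, and the only place where it can work. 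If you replace your compression/decompression step by a factorization of the type of Lemma \ref{kgh-lemma} and build your generators as doubly indexed infinite products, your outline becomes essentially the paper's proof.
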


The first step in the proof is a simple factorization lemma for homeomorphisms.    Say that a set $X$ is a \emph{standard infinite union of intervals} if it is the image of $\bigcup_{n \in \Z} [n+\frac{1}{3}, n-\frac{1}{3}]$ under some $f \in \Homeo_0(\R)$.  We denote by $\supp(h)$ the support of a homeomorphism $h$.

\begin{lemma} \label{kgh-lemma}
Let $\{f_n\} \subset \Homeo_0(\R)$.   There exist sets $X$, $Y \subset \R$, each a standard infinite union of intervals, and for each $n$ a factorization $f_n= g_n h_n k_n$, 
where $k_n$ has compact support,  $\supp(g_n) \subset X$, and $\supp(h_n) \subset Y$. 
\end{lemma}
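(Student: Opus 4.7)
The plan is to construct a single pair $(X,Y)$ that works for the entire sequence $\{f_n\}$ by first choosing a sufficiently fast-growing bi-infinite sequence $(t_k)_{k\in\Z}$, $t_k\to\pm\infty$, via a diagonal argument, and then setting $X:=\bigcup_{k\in\Z}[t_{4k},t_{4k+3}]$ and $Y:=\bigcup_{k\in\Z}[t_{4k+2},t_{4k+5}]$. Both of these are standard infinite unions of intervals (a piecewise-linear parametrization of the standard union works), $X\cup Y=\R$, and the two families interlock: the gaps of $X$, namely $(t_{4k+3},t_{4k+4})$, lie strictly inside $Y$-intervals, and the gaps of $Y$, namely $(t_{4k+1},t_{4k+2})$, lie strictly inside $X$-intervals.

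For the diagonal step, I would set $t_0=0$ and inductively choose $t_{k+1}$ (for $k\geq 0$) larger than $t_k+1$ and than every $f_n(t_k)$ and $f_n^{-1}(t_k)$ for $n\leq k$, with an analogous construction for negative indices. This guarantees that for each fixed $n$ and all $|k|$ sufficiently large (say $|k|\geq M_n$), $f_n(t_k)\in(t_{k-1},t_{k+1})$; in particular $t_{4k}<f_n(t_{4k+1})<f_n(t_{4k+2})<t_{4k+3}$.

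Given $n$, set $K_n:=[t_{-4M_n},t_{4M_n}]$ and define $g_n$ to be the identity outside $X$ and on each $X$-interval with $|k|<M_n$; on each ``outer'' $X$-interval $[t_{4k},t_{4k+3}]$ with $|k|\geq M_n$, put $g_n:=f_n$ on $[t_{4k+1},t_{4k+2}]$, $g_n(t_{4k}):=t_{4k}$, $g_n(t_{4k+3}):=t_{4k+3}$, and interpolate linearly on the two remaining subintervals. The bounds above make $g_n$ a well-defined orientation-preserving homeomorphism of $\R$ with $\supp(g_n)\subset X$. Setting $\tau:=g_n^{-1}f_n$, one has $\tau=\id$ on $(t_{4k+1},t_{4k+2})$ for $|k|\geq M_n$ (since $g_n=f_n$ there); combined with $\R\setminus Y\subset X$ this forces $\supp(\tau)\subset Y\cup K_n$. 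Moreover $\tau$ fixes both gaps of $Y$ adjacent to each outer $Y$-interval $[t_{4k+2},t_{4k+5}]$, so by continuity $\tau$ fixes its endpoints and preserves it. Defining $h_n$ to equal $\tau$ on these outer $Y$-intervals and $\id$ elsewhere gives $\supp(h_n)\subset Y$; and $k_n:=h_n^{-1}\tau$ is the identity on every outer $Y$-interval and on $\R\setminus(Y\cup K_n)$, hence compactly supported. This yields the factorization $f_n=g_n h_n k_n$.

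The main obstacle is producing a single pair $(X,Y)$ that works simultaneously for the whole sequence, which is exactly what the diagonal construction of $(t_k)$ achieves. Once the sequence is fixed, the interlocking geometry -- gaps of one family lying strictly inside intervals of the other -- is precisely what lets $g_n$ match $f_n$ on $X\setminus Y$ while remaining supported in $X$, and this matching is in turn what forces $g_n^{-1}f_n$ to preserve each outer $Y$-interval, giving the clean separation into the $Y$-supported piece $h_n$ and the compactly supported piece $k_n$.
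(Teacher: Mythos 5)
Your proposal is correct and follows essentially the same strategy as the paper's proof: a diagonal choice of a fast-growing sequence of points yields two interlocking families of intervals, $g_n$ is built inside $X$ so as to agree with $f_n$ on the gaps of $Y$ outside a compact region, and the remainder $g_n^{-1}f_n$ then splits into a $Y$-supported factor and a compactly supported one. The only detail to tidy is which $Y$-intervals count as ``outer'' (both adjacent gaps must have block index of absolute value at least $M_n$, which fails for one boundary block on the negative side); this changes nothing, since the leftover intervals are finitely many and bounded and so are absorbed into the compactly supported factor $k_n$.
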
  

\begin{proof}  
This is a special (easier) case of Lemma \ref{lemma.fragmentation.annuli} below, this case can be done by hand as follows.  We denote by $[a\pm \varepsilon]$ the interval $[a-\varepsilon, a+\varepsilon]$. 
First, we inductively define the endpoints of the intervals in $X$.   Assume without loss of generality that $f_0 = \id$, and let $X_0 = [-3, -1] \sqcup [1,3]$.  
Inductively, suppose we have defined $X_k = [- x_k^+, -x_k^-] \sqcup [x_k^-, x_k^+]$ and points $z_{k-1}$ for all $k < n$.  Let $x_{n}^- = x_{n-1}^+ + 1$. 
Choose $z_{n}$ large enough, so that the interval $[z_{n}\pm \frac{1}{2}]$ and all its images under $f_{0}, \dots, f_{n}$ are located on the right-hand side of $x_{n}^-$, and the interval $[-z_{n}\pm \frac{1}{2}]$ and all its images under $f_{0}, \dots, f_{n}$ are located on the left-hand side of $-x_{n}^-$.  For instance, one could take $z_n = \max\{(x_{n}^-, f_j^{-1}(x_{j}^-),  - f_j^{-1}(-x_{j}^-)  : j \leq n\}   + 1$.  
Now choose $x_{n}^+$ large enough so that $[-x_{n}^+, x_{n}^+]$ contains all the intervals $f_j\left( [-z_{n}\pm \frac{1}{2}] \right)$, $f_j \left( [-z_{n}\pm \frac{1}{2}] \right)$ for $j \leq n$. 
Let $X_{n}=[-x_{n}^+, -x_{n}^-] \cup [x_{n}^+, x_{n}^-]$.

The purpose of this construction is to guarantee that, for every $j<n$, there exists a homeomorphism of $\R$ supported on $X_{n}$ that agrees with $f_{j}$ on  $[-z_{n}\pm \frac{1}{2}] \cup [z_{n}\pm \frac{1}{2}]$.   Such a homeomorphism exists because $X_{n}$ contains an interval containing both $[-z_{n}\pm \frac{1}{2}]$ and its image under $f_j$ (and similarly for $[z_{n}\pm \frac{1}{2}]$ and its image).

Let $X := \sqcup X_j$, let
$Y'= \bigcup_{n \geq 1} [-z_n \pm\frac{1}{2}]  \cup [z_n \pm\frac{1}{2}]$, and let $Y = \R \setminus Y'$.  Then $X$ and $Y$ both  are standard infinite unions of intervals.
The observation in the previous paragraph says that, for each $n$, we can find $g_n \in \Homeo_0(\R)$ supported on $X$ that coincides with $f_n$ on the subset 
$$
\bigcup \limits_{m \geq n} \left[-z_m \pm \tfrac{1}{2}\right] \cup \left[z_m \pm \tfrac{1}{2}\right]
$$
 of $Y'$, so $g_n^{-1}f_n$ is the identity there. In particular $g_n^{-1}f_n$ fixes $\pm z_{n}$ and we may write $g_n^{-1}f_n = h_{n} k_{n}$ with $k_{n}$ supported on $[-z_{n}, z_{n}]$ and $h_{n}$ supported on the complement. Actually $h_{n}$ is supported on
 $$
 \R \setminus \left( [-z_{n}, z_{n}]  \cup \bigcup_{m \geq n} [-z_m \pm\frac{1}{2}]  \cup [z_m \pm\frac{1}{2}] \right)
 $$
 which is a subset of $Y$, and we have $f_{n} = g_{n} h_{n} k_{n}$ as required by the lemma.
 \end{proof} 

Now to prove Proposition \ref{homeo-R prop}, take the sequences $k_n$, $g_n$ and $h_n$ given by the lemma.  We will build sets $S_1$, $S_2$ and $S_3 \subset \Homeo_0(\R)$ with $|S_1| = 4$ and $|S_2| = |S_3| = 3$ such that $\ell_{S_1}(k_n) \leq 6n+4$, $\ell_{S_2}(g_n) \leq 4n+4$, and $\ell_{S_3}(h_n) \leq 4n+4$.

\begin{proof}
Given that each $k_n$ has compact support, we may take compact intervals $K_n$ with $\supp(k_n) \subset K_n$,  such that $K_i$ is contained in the interior of $K_{i+1}$, and such that $\bigcup_n K_n = \R$.  Let $d: \R \to \R$ be a homeomorphism such that $d(K_i)$ contains  $K_{i+1}$ for all $i$.  Then $\supp(d^n k_{n} d^{-n}) \subset K_1$.  

We now use a classical trick. It appears, perhaps first, in Fisher \cite{Fisher}, but also in a related form in \cite{Galvin} and later in $\cite{CF}$ (and probably elsewhere!).  

\begin{construction} \label{fisher-construction}
Suppose $\{a_n\}$ is a sequence of homeomorphisms supported on a set $Z$, and there exist homeomorphisms $T$ and $S$ such that 
\begin{enumerate}
\item the sets $Z$, $S(Z)$, $S^2(Z)$ $\dots$ are pairwise disjoint,
\item the sets $\supp(S)$, $T(\supp(S))$, $T^2(\supp(S)) \dots$ are pairwise disjoint, and
\item  The maximum diameter of the connected components of $T^n(\supp(S))$ and of $S^n(Z)$ converges to 0 as $n \to \infty$.
\end{enumerate}
Denote $a^b = bab^{-1}$. Since the map
$
a_{n}^{T^n S^m}
$
is supported on $T^n S^m(Z)$, the three above properties entail that the function 
$$A(x) := \prod_{n \geq 0, m \geq 0} a_{n}^{T^n S^m}(x)$$
defines a homeomorphism.  Moreover, it is easily verified that the commutator $$[A^{T^{-n}},S] = A^{T^{-n}} (A^{-1})^{ST^{-n}}   = a_{n}$$
 by checking this equality separately on each set $T^n S^m(Z)$.

\end{construction} 

\begin{remark} 
Other variants of condition 3 can also be used in this construction.  For example, it can be replaced by either of:
\begin{enumerate}[3']
\item The collection of sets $Z$, $S(Z)$, $S^2(Z)$ $\dots$ and $\supp(S)$, $T(\supp(S))$, $T^2(\supp(S)) \dots$ are locally finite.  
\end{enumerate}
\begin{enumerate}[3'']
\item The maximum diameter of a connected component of $S^n(Z)$ converges to 0, and  $\supp(S)$, $T(\supp(S))$, $T^2(\supp(S)) \dots$ is locally finite.  
\end{enumerate}

\end{remark}

We will apply Construction \ref{fisher-construction} to the sequence $a_n := d^n k_{n} d^{-n}$ supported on $K_1$.  To do this, we may take $T$ to be supported on a neighborhood of $K_1$, and to satisfy $T(K_1) \cap K_1 = \emptyset$.  Then let $S$ be a homeomorphism supported on a smaller neighborhood $N$ of $K_1$, small enough so that $T(N) \cap N = \emptyset$, and again satisfying $S(K_1) \cap K_1 = \emptyset$.   We can choose $T$ and $S$ such that property 3 of the construction is satisfied.
Let $S_1 = \{d, A, S, T\}$, then $k_n = d^{-n} a_n d^n$ is a word of length $6n+4$ in $S_1$

Similarly, given the sequence $\{g_n\}$ supported on $X$ (a standard union of disjoint intervals), we can find a homeomorphism $T'$ supported on a neighborhood $N_X$ of $X$ that consists of pairwise disjoint neighborhoods of the intervals comprising $X$, and satisfying $T'(X) \cap X = \emptyset$.  Then take $S'$ to be supported on a smaller neighborhood, say $N'_X$ of $X$, so that translates of $N'_X$ by $T'$ are also disjoint.  Choose $T'$ and $S'$ that satisfies property 3. Together with the construction, this gives a set $S_2$ of cardinality 3 so that each $g_n$ is a word of length $4n+4$ in $S_2$.  

Finally, as $Y$ is also a standard union of disjoint intervals, this same argument applies verbatim to the sequence $\{h_n\}$ supported on $Y$.  

\end{proof}

\begin{remark}
This proof can be generalized directly to $\Homeo_0(\R^n)$ using collections of disjoint concentric annuli in the place of our sets $X$ and $Y$ of disjoint intervals.   However, the strategy does not immediately apply to $\Diff^r_0(\R^n)$ for any $n$ and any $r \geq 1$, since the ``infinite product" of conjugates of compactly supported diffeomorphisms, as in Construction \ref{fisher-construction}, will not generally be differentiable.   
\end{remark}

We conclude this section by noting an interesting application to orderable groups.   

\begin{corollary} \label{LO cor}
Let $G$ be a countable left-ordered group.  Then there exists a finitely generated left-orderable group $H$ containing $G$. Moreover, one can order $H$ such that the inclusion $H \to G$ is order preserving. 
\end{corollary}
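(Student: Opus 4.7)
The plan is to combine the classical dynamical realization of countable left-ordered groups with Proposition \ref{homeo-R prop}. Given a countable left-ordered group $(G,<)$, enumerated as $g_0 = \id, g_1, g_2, \dots$, the dynamical realization produces a faithful, order-preserving homomorphism $\rho : G \hookrightarrow \Homeo_0(\R)$ together with a basepoint $p_0 \in \R$ such that $g \mapsto \rho(g)(p_0)$ is an order-embedding of $(G,<)$ into $\R$. In particular $\rho(g)(p_0) = p_0$ forces $g = \id$, and $g > \id$ in $G$ if and only if $\rho(g)(p_0) > p_0$.

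Next, apply Proposition \ref{homeo-R prop} to the sequence $\{\rho(g_n)\}$ to obtain a finitely generated subgroup $H \subseteq \Homeo_0(\R)$ containing $\rho(G)$. Identifying $G$ with its image under $\rho$, this produces a finitely generated group $H$ containing $G$. Moreover, as a subgroup of $\Homeo_+(\R) = \Homeo_0(\R)$, the group $H$ is automatically left-orderable.

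To arrange the order on $H$ so that it restricts to the given order on $G$, fix a countable dense sequence $p_0 = x_0, x_1, x_2, \dots$ in $\R$ with $p_0$ placed first, and define a left-order $\prec$ on $H$ by declaring $h \succ \id$ iff $h(x_i) > x_i$ for the smallest index $i$ with $h(x_i) \neq x_i$. By density and continuity, no nonidentity element of $H$ fixes every $x_i$, so this is a well-defined, total, left-invariant order. For any $g \in G \setminus \{\id\}$, the dynamical realization guarantees $\rho(g)(x_0) \neq x_0$, so the order ranks $\rho(g)$ relative to $\id$ using $x_0 = p_0$ alone; by the last property of $\rho$ above, this coincides with the original order $<$ on $G$.

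There is no substantial obstacle: both ingredients — the dynamical realization and Proposition \ref{homeo-R prop} — are already in hand. The only technical point is placing $p_0$ first in the dense sequence defining the lexicographic order on $H$, which ensures that the restriction of the order to $G$ matches the prescribed one.
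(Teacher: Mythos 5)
Your proposal is correct and follows essentially the same route as the paper: realize $(G,<)$ order-preservingly in $\Homeo_+(\R)$ via the dynamical realization, apply Proposition \ref{homeo-R prop} to get the finitely generated $H$, and order $H$ by the standard dense-sequence (lexicographic) order on $\Homeo_+(\R)$, with the basepoint listed first so the restriction to $G$ recovers the given order. The only difference is that you spell out the details the paper delegates to the citations of \cite{GOD}.
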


\begin{proof}
Given $G$, realize $G$ as a subgroup of $\Homeo_+(\R)$; this can be done so that the linear order on $G$ agrees with that on the orbit $G(0) \subset \R$ under the usual order on $\R$.  (This is standard, see eg. \cite[Prop. 1.1.8]{GOD}).  Proposition \ref{homeo-R prop} implies that $G \subset H$, for some finitely generated group $H \subset \Homeo_+(\R)$.  Now $H$ can be given a left-invariant order that agrees with the given order on $G$ -- in fact all of $\Homeo_+(\R)$ can be given such an order, following \cite[\S 1.1.3]{GOD}.
\end{proof}

\begin{remark} 
Related to order structures, we also note that the strategy of the proof of Proposition \ref{homeo-R prop} appears to give an alternative proof of results in \cite{DH}.   Droste and Holland show there that that the automorphism group of a doubly homogeneous chain (meaning a totally ordered set where the set of order-preserving bijections acts transitively on pairs) has uncountable cofinality.  Interpreting $[a, b]$ as $\{c : a \leq c \leq b\}$ in our proof allows one to extend it to a more general setting.  
\end{remark}

\section{Schreier's property for $\Diff^r_0(M)$, $M$ closed} \label{diff-sec}

In this section we prove Theorem \ref{schrier-thm} for the case of diffeomorphism groups of closed manifolds.  We defer the case of open manifolds until after the proof of strong boundedness for $\Diff^r_0(\R^n)$. 

The proof uses the following two classical results.

\begin{theorem}[Simplicity of diffeomorphism groups \cite{Anderson}, \cite{Mather1} \cite{Mather2}, \cite{Thurston}.]  \label{simplicity thm}
Let $M$ be a connected manifold (without boundary), and $r \neq \dim(M) +1$.  Then the identity component of the group of compactly supported $C^r$ diffeomorphisms of $M$, denoted $\Diff_c^r(M)$, is a simple group.
\end{theorem}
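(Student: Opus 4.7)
The plan is to follow the classical three-step approach of Epstein, Anderson, Mather, and Thurston: fragmentation, perfectness, and a normal-subgroup commutator trick.

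First, I would establish the \emph{fragmentation lemma}: every element of the identity component factors as a finite product $f = f_1 \cdots f_k$ with each $f_i$ supported in an embedded open ball. The standard proof picks an isotopy from $\id$ to $f$, covers its (compact) support by finitely many coordinate balls, and cuts the isotopy into time-pieces short enough that each piece is $C^r$-close to the identity and can be realized as a product of factors supported in the individual balls.

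Second, I would prove \emph{perfectness}: every element of $\Diff_c^r(M)$ is a product of commutators. Via fragmentation this reduces to showing that every compactly supported $C^r$ diffeomorphism of $\R^n$ is a product of commutators. The prototype is Mather's infinite-product trick: given $f$ supported in a ball $B$ and a diffeomorphism $g$ such that $\{g^i(B)\}_{i \geq 0}$ is a pairwise disjoint family shrinking to a point, the formal product $F = \prod_{i \geq 0} g^i f g^{-i}$ converges in $\Diff_c^r(\R^n)$ (because at any point at most one factor is nontrivial), and one verifies directly that $f = F \cdot (g F g^{-1})^{-1} = [F, g]$. Convergence requires a delicate control of the $C^r$-norms of $g^i f g^{-i}$ along the shrinking sequence, and this is precisely where the hypothesis $r \neq \dim(M) + 1$ enters: in the borderline case the scaling estimate fails and the construction breaks down. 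Extending perfectness beyond the range where this direct trick works is the substance of the Mather-Thurston theorem, and is by far the main obstacle; it requires the Haefliger-classifying-space / foliation machinery which I would only be able to cite.

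Third, once perfectness is known, simplicity follows from a standard commutator manipulation. Let $N \triangleleft \Diff_c^r(M)$ be a nontrivial normal subgroup, pick $f \in N$ with $f \neq \id$, and choose an embedded ball $B$ with $f(B) \cap B = \emptyset$. For any $h, k \in \Diff_c^r(M)$ supported in $B$, the disjointness forces $fhf^{-1}$ to commute with everything supported in $B$, and a short calculation expresses the commutator $[h^{-1}, k]$ in the form $[[f, h], k]$, which lies in $N$. Thus $N$ contains the entire commutator subgroup of $\Diff_c^r(B) \cong \Diff_c^r(\R^n)$, which by perfectness equals $\Diff_c^r(B)$ itself. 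Conjugating by elements of $\Diff_c^r(M)$ yields all diffeomorphisms supported in any embedded ball, and a final application of fragmentation gives $N = \Diff_c^r(M)$.
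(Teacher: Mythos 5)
The paper does not actually prove this statement: it is quoted as a classical theorem, with the $C^0$ case attributed to Anderson and the genuinely hard content --- perfectness, hence simplicity, of $\Diff_c^r$ for $1 \leq r \leq \infty$, $r \neq \dim(M)+1$ --- cited from Mather and Thurston, whose arguments the authors describe as deep. So your proposal is not competing with a proof in the paper; judged on its own terms, its skeleton is the standard and correct one: fragmentation reduces to diffeomorphisms supported in balls, the disjoint-support identity $[h^{-1},k]=[[f,h],k]$ shows that a nontrivial normal subgroup contains all commutators of elements supported in a displaceable ball, and perfectness then upgrades this to simplicity. Since you ultimately cite the Mather--Thurston machinery for perfectness, the proposal amounts to a reduction-plus-citation, which is in effect how the paper itself treats the theorem.

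There is, however, one genuine inaccuracy worth flagging: your account of where the hypothesis $r \neq \dim(M)+1$ enters. The infinite product $F = \prod_{i \geq 0} g^i f g^{-i}$ over a shrinking disjoint family of balls is Anderson's $C^0$ argument; the observation that at most one factor is nontrivial at each point gives convergence only as a homeomorphism, and for $r \geq 1$ the limit is in general not $C^r$ at the accumulation point, for every $r$, not just the borderline one --- the paper itself makes exactly this point when explaining why Construction \ref{fisher-construction} does not adapt to $\Diff^r_0(\R^n)$, and its Lemma \ref{avila-lem} has to impose norms tending to zero to salvage such products. The critical exponent $\dim(M)+1$ does not arise from a scaling estimate in this swindle; it emerges from Mather's much more elaborate scheme for finite $r$, while $r=\infty$ is Thurston's theorem. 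So the outline stands provided you present perfectness for $r \geq 1$ purely as a citation, and do not suggest that the infinite-product trick establishes it in any differentiable range.
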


Here, the $C^\infty$ case is due to Thurston \cite{Thurston}, and the $C^r$ case, for $1 \leq r < \infty$ is from Mather \cite{Mather1, Mather2}.  Mather and Thurston's proofs use different arguments, but both deal with group homology and are quite deep.  The $C^0$ case of the theorem, modulo the next ``fragmentation lemma", is much easier and originally due to Anderson \cite{Anderson}.  

\begin{lemma}[Fragmentation] \label{frag lem}
Let $M$ be a compact (not necessarily closed) manifold, and $\mathcal{U}$ a finite open cover of $M$.  Then $\Diff_0^r(M)$ is generated by the set 
$$\{f \in \Diff_0^r(M) : \supp(f) \subset U \text{ for some } U \in \mathcal{U} \}.$$  
\end{lemma}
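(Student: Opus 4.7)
The plan is the classical two-step fragmentation argument: first reduce $f$ to a finite product of diffeomorphisms that are close enough to the identity that, under a chosen isotopy, each trajectory stays inside one element of $\mathcal{U}$; then fragment each such ``small'' diffeomorphism using a partition of unity subordinate to a refinement of $\mathcal{U}$.

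For the reduction step, fix an isotopy $(f_t)_{t \in [0,1]}$ from $\id$ to $f$, and a refinement $\mathcal{V}$ of $\mathcal{U}$ with each $\overline{V}$ compact and contained in some $U \in \mathcal{U}$. By uniform continuity of $(t,x) \mapsto f_t(x)$ on the compact set $[0,1] \times M$ together with a Lebesgue-number argument, choose a subdivision $0 = t_0 < t_1 < \cdots < t_N = 1$ fine enough that for each $i$ and each $V \in \mathcal{V}$, the set $\bigcup_{t \in [t_{i-1}, t_i]} (f_t \circ f_{t_{i-1}}^{-1})(V)$ is contained in a single element of $\mathcal{U}$. Setting $g_i := f_{t_i} \circ f_{t_{i-1}}^{-1}$, one has $f = g_N \cdots g_1$, and each $g_i$ is the time-$1$ map of a ``$\mathcal{U}$-small'' isotopy $g_i^s$ whose trajectories are individually contained in elements of $\mathcal{U}$.

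For the fragmentation step, fix a partition of unity $(\chi_j)$ subordinate to an enumeration $V_1, \dots, V_k$ of $\mathcal{V}$, and inductively construct fragments $h_1, \dots, h_k$ with $\supp(h_j) \subset V_j$ and $g_i = h_k \circ \cdots \circ h_1$. For $r \geq 1$ write $g_i^s$ as the flow of a time-dependent vector field $X_s$ and take $h_j$ to be the time-$1$ map of a localized vector field obtained by multiplying $X_s$ by $\chi_j$ and correcting for the portion already realized by $h_1, \dots, h_{j-1}$; for $r = 0$ the analogous telescoping is performed directly with continuous isotopies. In either case, the $\mathcal{U}$-smallness of the isotopy is what ensures that each local construction extends by the identity to a well-defined $C^r$ diffeomorphism of $M$ supported in $V_j$, and hence in some $U \in \mathcal{U}$.

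The main obstacle is producing the fragments $h_j$ in a way that both preserves $C^r$ regularity and genuinely telescopes back to $g_i$. A naive formula such as $G_j(x) := g_i^{\Phi_j(x)}(x)$ with $\Phi_j = \sum_{\ell \leq j} \chi_\ell$ need not give a diffeomorphism, since the gradient of $\Phi_j$ can be arbitrarily large relative to the speed of the isotopy and the map may fail to be injective. The telescoping must therefore be constructed with a more careful parametrization that takes into account both the isotopy's local speed and the bump functions $\chi_j$, and one must verify separately in each regularity class that the resulting fragments are genuine $C^r$ diffeomorphisms; this verification is where $\mathcal{U}$-smallness is used in an essential way.
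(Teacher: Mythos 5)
Your vector-field/partition-of-unity scheme for $r \geq 1$ is essentially the standard argument (and the one the paper points to via Banyaga and Bounemoura), but as written it has two problems, one fixable and one fatal. The fixable one: you correctly observe that $\mathcal{U}$-smallness of trajectories is not enough to make the reparametrized map $G_j(x) = g_i^{\Phi_j(x)}(x)$ (or the cut-off vector field construction) a diffeomorphism, but you then leave the ``more careful parametrization'' unspecified, so the telescoping step is not actually carried out. The standard repair is to use the fact that the isotopy $t \mapsto f_t$ is a continuous path in the $C^r$ topology, so a sufficiently fine subdivision makes each $g_i = f_{t_i} f_{t_{i-1}}^{-1}$ $C^1$-close to the identity relative to the fixed Lipschitz constants of the bump functions $\chi_j$; with that quantitative smallness (not just containment of trajectories in charts) the naive formula does define $C^r$ diffeomorphisms and the telescoping closes. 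Your proof needs this smallness built into the choice of subdivision, and a short estimate verifying injectivity/invertibility of each fragment.

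The fatal problem is the case $r = 0$, which you dispose of in one clause by saying the ``analogous telescoping is performed directly with continuous isotopies.'' There is no analogue: a topological isotopy is not generated by a vector field, there is no derivative to control, and the obstruction you yourself identified (failure of injectivity of the reparametrized map) cannot be removed by any elementary cut-off or smallness argument in the topological category. Fragmentation for $\Homeo_0(M)$ is a genuinely deep theorem of Edwards and Kirby, proved via the torus trick and their theory of deformations of embeddings (this is exactly the point the paper makes when stating the lemma, and it is why the paper cites \cite{EK} rather than proving it). So your proposal, taken as a proof of the lemma as stated for all $0 \leq r \leq \infty$, has a genuine gap at $r = 0$: either you must invoke Edwards--Kirby for that case, or you would need to reproduce torus-trick-level machinery, which the proposed telescoping does not begin to do.
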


The proof of Lemma \ref{frag lem} for groups of homeomorphisms is a major result of Edwards and Kirby, it uses the topological torus trick \cite{EK}.  The proof for $C^r$--diffeomorphisms is much easier: it uses only the fact that each $C^r$ diffeomorphism near the identity can be written as the time one map of a time-dependent vector field; one then ``cuts off" such vector fields by suitable bump functions.   See \cite{Banyaga} or \cite{Bounemoura} for an exposition. 

We will also use a lemma on affine subgroups.   
\begin{lemma}[Existence of affine subgroups] \label{affine lem}
Let $B \subset \R^n$ be a compact ball.   There exist one-parameter families of smooth diffeomorphisms $f^t$ and $g^s$ supported on $B$ and satisfying the relations
$f^t g^s f^{-t} = g^{s e^t}$ for all $s, t$.  
\end{lemma}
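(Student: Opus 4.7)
The plan is to construct $f^t$ as a one-parameter subgroup with an attracting fixed point in the interior of $B$, and then to define the family $\{g^s\}$ by conjugating a fixed ``seed'' diffeomorphism by $f^{\log|s|}$, so that the affine relation drops out algebraically. After centering $B$ at the origin of $\R^n$, I would choose a smooth cutoff $\chi\in C^\infty(\R^n)$ that equals $1$ on a slightly smaller closed ball $B'\subset B$, is strictly positive on $\interior(B)$, and vanishes on $\R^n\setminus B$. Setting $X(x):=-\chi(x)x$ and letting $f^t$ denote its flow gives a one-parameter subgroup of $\Diff^\infty(\R^n)$ supported in $B$; on $B'$ the flow is literally the linear contraction $x\mapsto e^{-t}x$, and a radial ODE argument shows that the origin is a global attractor on $\interior(B)$, i.e., for every $x\in\interior(B)$ and every neighborhood $V$ of the origin there is a $T_0(x)>0$ with $f^T(x)\in V$ for all $T\ge T_0(x)$.

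Fix such a neighborhood $V\subset B'$, and pick smooth diffeomorphisms $g^+,g^-\in\Diff^\infty(\R^n)$ supported in $B\setminus V$. Define
\[
g^s := \begin{cases} f^{\log s}\, g^+\, f^{-\log s}, & s>0, \\ \id, & s=0, \\ f^{\log|s|}\, g^-\, f^{-\log|s|}, & s<0. \end{cases}
\]
Each $g^s$ is a smooth diffeomorphism supported in $B$ (since $f^t$ preserves $B$ setwise), and the affine relation follows from $f^t$ being a one-parameter subgroup: for $s>0$,
\[
f^t\, g^s\, f^{-t} \;=\; f^{t+\log s}\, g^+\, f^{-(t+\log s)} \;=\; f^{\log(se^t)}\, g^+\, f^{-\log(se^t)} \;=\; g^{se^t},
\]
with the analogous calculation for $s<0$ and both sides equal to the identity when $s=0$. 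For each $x\in\interior(B)$ and each $s$ sufficiently small we have $T:=-\log|s|>T_0(x)$, whence $f^T(x)\in V$, $g^\pm\equiv\id$ on $V$, and the formula collapses to $g^s(x)=x$; this gives continuity of the family at $s=0$ and shows that it is well-defined on all of $\R$.

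The main obstacle I anticipate is that one might first try to realize $\{g^s\}$ as an honest one-parameter subgroup, i.e.\ as the flow of a single vector field $Y$; the affine relation then forces the Lie-algebra identity $[X,Y]=-Y$, and a direct ODE analysis (in the $1$-dimensional model this reduces to an equation whose nontrivial solutions are of the form $\psi=C\chi\exp\!\int dx/\chi$) shows that any such $Y$ must blow up wherever $X$ vanishes, so no compactly supported smooth $Y$ exists. The conjugation trick sidesteps this obstruction by giving up the subgroup structure of $\{g^s\}$ while retaining the affine algebraic relation; the resulting family is not itself a subgroup of $\Diff^\infty(\R^n)$, but what the strong-distortion application actually uses is the ability to write specific elements such as $g^{e^n}=f^n g^+ f^{-n}$ as short words in a finite alphabet, which the construction provides.
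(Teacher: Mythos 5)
Your family $\{g^s\}$ does satisfy the displayed identity $f^tg^sf^{-t}=g^{se^t}$ verbatim, but it is not what the lemma is for, and this matters downstream. In the paper the words ``one-parameter families'' mean genuine $C^r$ flows generating an \emph{affine subgroup}: Corollary \ref{affine cor} speaks of the group ``generated by $C^r$ flows $f^t$ and $g^s$ satisfying relations as in Lemma \ref{affine lem}'', and its proof uses the identity $f^tg^sf^{-t}(g^s)^{-1}=g^{s(e^t-1)}$, which requires the group law $g^ag^b=g^{a+b}$; the proof of Proposition \ref{schrier-compact} (and likewise Theorem \ref{theo.diff.R}) then writes a time-one map as $g^1_n=[a_n,b_n]^{k(n)}$ using $g^1=(g^{1/k})^k$, with $a_n,b_n$ of arbitrarily small $C^r$ norm, which also requires continuity of $s\mapsto g^s$ in the $C^r$ topology near $s=0$. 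Your family $g^s=f^{\log|s|}g^{\pm}f^{-\log|s|}$ has neither property: it is not a subgroup, and it is in general not even $C^0$-continuous at $s=0$ (for small $s>0$ the conjugation pushes $\supp(g^+)$ into a thin shell near $\partial B$ while the flow, being radial, preserves angular displacement, so $\|g^s-\id\|_{C^0}$ stays bounded below whenever $g^+$ moves some point off its radial line; only pointwise convergence holds, as you note). Your closing justification --- that the application only needs words like $g^{e^n}=f^ng^+f^{-n}$ --- misreads how the lemma is used: the affine structure is needed to express \emph{small} elements of the $g$-flow as single commutators of \emph{small} elements, which is exactly the norm control fed into Lemma \ref{avila-lem}.

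Moreover, the obstruction that motivates your detour is not real, and the paper's proof contradicts it: compactly supported smooth flows satisfying the affine relation do exist. The point is not to fix $X$ as a cutoff of the linear contraction and then solve for its partner; one instead conjugates the standard pair $\partial_x$, $x\partial_x$ on $\R$ into $(0,1)$ by a diffeomorphism $f\colon(0,1)\to\R$ (following Navas) chosen so that \emph{both} pulled-back fields $f^*(\partial_x)$ and $f^*(x\partial_x)$ are infinitely flat at $0$ and $1$; extending by zero gives two smooth vector fields supported in $[0,1]$ whose flows satisfy the relation, and the $n$-dimensional case is obtained by letting this act on the first factor of an embedded $(-\delta,1+\delta)\times S^{n-1}\subset B$. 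In your ODE $\phi\psi'-\psi\phi'=-\psi$, the solution $\psi=C\phi\exp\bigl(-\int dx/\phi\bigr)$ blows up only at zeros of $\phi$ of the wrong type: already for the model $\phi=x$ one gets $\psi=\mathrm{const}$, so ``$Y$ blows up wherever $X$ vanishes'' is false, and at an infinitely flat zero the exponential factor \emph{decays} to all orders when the flow of $X$ attracts toward that zero. Your blow-up occurs because your $X$ contracts to an interior point, making $\partial B$ a flat \emph{repelling} set; the paper's dilation-like field has a hyperbolic repelling fixed point in the interior and is attracting and flat at the boundary, so the translation-like partner extends smoothly by zero. Building the two fields together by this conjugation recovers both the subgroup structure and the quantitative control that the rest of the paper needs.
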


The idea of the proof in the one-dimensional case is to conjugate the standard affine group in $\Diff^\infty_0(\R)$ generated by the flows $f^t(x) = e^t x$ and $g^s(x) = x + s$ by a suitable homeomorphism from $\R$ to $(0,1)$ so as to ``flatten" derivatives at the endpoints; this is generalized to higher-dimensional manifolds by embedding a family of copies of $(0,1)$ inside a ball. 

\begin{proof}
For the 1-dimensional case, we follow \cite[\S 4.3]{Navas}.  
Fix $\epsilon < \frac{1}{2}$, and define homeomorphisms $f_1: (0,1) \to \R$ and $f_2: (0,1) \to (0,1)$ by \\
$f_1(x) = \left\{ \begin{array}{rc} -1/x &  \text{ for } x \in (0, \epsilon) \\
e^{-1/x} & \text{ for } x \in (\epsilon, 1) \end{array} \right.$ \,\, 
$f_2(x) \left\{ \begin{array}{rc} 1/(1-x) &  \text{ for } x \in (0, \epsilon) \\
1 - e^{1/(x-1)} & \text{ for } x \in (\epsilon, 1) \end{array} \right.$ \\
and let $f:(0,1) \to \R$ be the composition $f = f_1 \circ f_2^2$.  

The standard affine group in $\Diff^\infty(\R)$ is has its Lie algebra generated by the vector fields $\frac{\partial}{\partial x}$ and $x \frac{\partial}{\partial x}$.  Thus,  $f^*(\frac{\partial}{\partial x})$ and $f^*(x \frac{\partial}{\partial x})$ generate an affine subgroup of $(0,1)$.   One checks that these extend to smooth vector fields on $[0,1]$ that are infinitely flat at the endpoints, hence extend to smooth vector fields on $(-\delta, 1+\delta)$ supported on $[0,1]$.  These generate an affine subgroup $G \subset \Diff^\infty([-\delta, 1+\delta])$ supported on $[0,1]$.   Let $G(n)$ be the affine subgroup of $\Diff^\infty([-\delta, 1+\delta] \times S^{n-1})$ given by the product action of $G$ on the $[-\delta, 1+\delta]$ factor, and trivial action on the $S^{n-1}$ factor.  

Finally, given a manifold $M$ of dimension $n$ and open ball $B$, we can take $\phi$ to be a smooth embedding of $(-\delta, 1+\delta) \times S^{n-1}$ in $M$, and consider the affine subgroup given by extending each element of $\phi G(n) \phi^{-1}$ to agree with the identity outside of the image of $\phi$.  
\end{proof}

Although Theorem \ref{simplicity thm} means that every $f \in \Diff^r_c(M)$ can be written as a product of commutators, Mather's proof is non-constructive, so gives no control on the norms of the elements in these commutators and the number of commutators in terms of the norm of $f$.  (It is however possible to control the norm and number in the $r = \infty$ and $r=0$ cases; see \cite{HRT} for the $C^\infty$ case, the $C^0$ case is an exercise.)  The benefit to working inside of affine subgroups is that elements close to the identity can always be written as commutators of elements close to the identity.   Precisely, we have the following corollary of Lemma \ref{affine lem}, giving control on norms of elements that will be used later on.    

\begin{corollary} \label{affine cor}
Let $r$ be arbitrary, and let $G$ be an affine subgroup of $\Diff^r_c(M)$ generated by $C^r$ flows $f^t$ and $g^s$ satisfying relations as in Lemma \ref{affine lem}.  For any neighborhood $\mathcal{U}$ of $\id$ in $\Diff^r_c(M)$, there exists a neighborhood $\mathcal{V}$ of $\id$ such that, if $g^s \in \mathcal{V}$, then $g^s$ can be written as a single commutator of elements of $\mathcal{U} \cap G$.  
\end{corollary}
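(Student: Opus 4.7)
The plan is to exploit the commutator identity implied by the relation of Lemma \ref{affine lem}. A direct calculation gives
\[ [f^t, g^s] \;=\; f^t g^s f^{-t} g^{-s} \;=\; g^{s e^t} g^{-s} \;=\; g^{s(e^t-1)}, \]
so writing a given $g^u$ as a commutator of elements of $G$ reduces to solving $s(e^t-1) = u$, which has many solutions. The strategy is to fix a small parameter $t_0>0$ once and for all and take $s := u/(e^{t_0}-1)$.

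Concretely, given the neighborhood $\mathcal{U}$, I would first use continuity of the flows $t \mapsto f^t$ and $s \mapsto g^s$ from $\R$ into $\Diff^r_c(M)$ to choose $t_0>0$ with $f^{t_0} \in \mathcal{U}$ and $b>0$ with $g^{s'} \in \mathcal{U}$ whenever $|s'|<b$. Setting $c := b(e^{t_0}-1)$, the corollary then reduces to producing a neighborhood $\mathcal{V}$ of $\id$ in $\Diff^r_c(M)$ with the property that $g^u \in \mathcal{V}$ forces $|u| < c$: for then $s := u/(e^{t_0}-1)$ satisfies $|s|<b$, hence $g^s \in \mathcal{U}$, and $g^u = [f^{t_0}, g^s]$ is the required commutator of elements of $\mathcal{U} \cap G$.

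The main obstacle is this last step, since a priori a $C^r$-neighborhood of $\id$ could contain $g^u$ for arbitrarily large $|u|$. To rule this out, I would exploit the explicit construction in Lemma \ref{affine lem}: pick a point $x_0$ in the interior of the support at which the generating vector field of $g^s$ is nonzero, for example $x_0$ is the image of $(1/2, \theta_0)$ under the embedding of $(0,1)\times S^{n-1}$ into the ball, for some fixed $\theta_0 \in S^{n-1}$. Because $g^s$ restricted to the flow line through $x_0$ is conjugate to the translation $y \mapsto y+s$ on $\R$, the curve $u \mapsto g^u(x_0)$ is injective and, as $u \to \pm\infty$, converges to two distinct fixed points of $g^s$ on the boundary of the support (corresponding to the endpoints $0$ and $1$ of the interval), both different from $x_0$.

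Since the orbit is contained in the compact support of the flow, the closure in $M$ of $\{g^u(x_0) : |u| \geq c\}$ is compact and avoids $x_0$, and therefore has positive distance $\epsilon>0$ from $x_0$. The set
\[ \mathcal{V} \;:=\; \{h \in \Diff^r_c(M) : d(h(x_0), x_0) < \epsilon\} \]
is $C^0$-open, hence also $C^r$-open, and any $g^u \in \mathcal{V}$ must satisfy $|u|<c$ by construction, completing the argument.
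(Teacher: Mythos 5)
Your proof is correct, and its core is the same as the paper's: the relation $f^t g^s f^{-t} = g^{s e^t}$ gives $[f^t, g^s] = g^{s(e^t-1)}$, and continuity of the flows $t \mapsto f^t$, $s \mapsto g^s$ lets you place the two conjugating elements inside $\mathcal{U}$. The difference is that the paper's proof stops there: it reduces the corollary to the claim that for $\delta$ small enough, $g^\delta$ is a commutator $[f^t,g^s]$ with $t,s$ small, in effect reading the hypothesis ``$g^s \in \mathcal{V}$'' as ``$s$ small'' -- which is exactly how the corollary is later applied (to $g^{1/k}$ with $k$ large). Your extra displacement argument -- picking $x_0$ on a flow line of $g$ that is conjugate to a translation, noting that $\{g^u(x_0) : |u| \geq c\}$ has closure avoiding $x_0$, and setting $\mathcal{V} = \{h : d(h(x_0), x_0) < \epsilon\}$ so that $g^u \in \mathcal{V}$ forces $|u| < c$ -- is a genuine additional step that justifies the statement as literally written, with $\mathcal{V}$ an honest neighborhood of the identity. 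The only caveat is that this step uses the explicit flows constructed in Lemma \ref{affine lem} (existence of a point with injective, non-recurrent orbit under $g^s$), rather than arbitrary flows satisfying the affine relation; since those are the only affine subgroups the paper ever uses, this restriction is harmless, and your argument is if anything more careful than the paper's.
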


\begin{proof}
Since the flows $f^t$ and $g^s$ are continuous in $t$ and $s$, it suffices to show that, given $\epsilon > 0$, there exists $\delta_0 > 0$ such that if $\delta < \delta_0$, then $g^{\delta}$ can be written as a commutator $[f^t, g^s]$ with $t, s < \epsilon$.  This is immediate from the relation in affine group, which gives $f^t g^s (f^t)^{-1} (g^s)^{-1}= g^{s (e^t -1)}$.  
\end{proof}

The next proposition is the main result of this section. 

\begin{proposition}[Theorem \ref{schrier-thm}, closed manifold case] \label{schrier-compact}
Let $M$ be a closed manifold, and $\{f_n\} \subset \Diff_0^r(M)$.  Assume $r \neq \dim(M)+1$.  Then there exists a finite set $S \subset \Diff_0^r(M)$ such that $\{f_n\} \subset \langle S \rangle$
\end{proposition}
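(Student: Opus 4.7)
My plan is to parallel the three-part strategy of Proposition \ref{homeo-R prop}---fragment, arrange disjoint iterated orbits, then encode via a Fisher-type infinite product---with the affine-subgroup tools of Lemma \ref{affine lem} and Corollary \ref{affine cor} providing the new ingredient that upgrades the construction from $\Homeo_0(M)$ to $\Diff^r_0(M)$. For the fragmentation step, I would fix a finite cover $\{B_1, \dots, B_K\}$ of $M$ by open balls diffeomorphic to $\R^n$ and apply Lemma \ref{frag lem} to write each $f_n$ as a finite product of elements supported in individual $B_i$'s. Regrouping these factors across $n$ yields, for each $i$, a single sequence $\{h^{(i)}_m\} \subset \Diff^r_c(B_i)$, and the union of the $h^{(i)}_m$ generates a subgroup containing $\{f_n\}$. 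It therefore suffices to fix a single ball $B = B_i$, a single sequence $\{h_m\} \subset \Diff^r_c(B)$, and produce a finitely generated subgroup of $\Diff^r_0(M)$ containing $\{h_m\}$.

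Next, I would set up the Fisher-style infinite product. Pick a point $p \in M$ and two diffeomorphisms $T, S \in \Diff^r_0(M)$ fixing $p$ such that the translates $T^n S^k(\overline B)$, for $n, k \ge 0$, are pairwise disjoint and locally finite, accumulating only at $p$. Construction \ref{fisher-construction} then formally defines
\[
A \;=\; \prod_{n, k \ge 0} h_n^{T^n S^k},
\]
and the commutator identity $[A^{T^{-n}}, S] = h_n$ places every $h_n$ in $\langle A, T, S \rangle$---provided $A$ is actually an element of $\Diff^r_0(M)$.

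The hardest step, and the main obstacle, is ensuring $C^r$ regularity of $A$. The product is automatically $C^0$ by the disjoint-support hypothesis, but as flagged in the remark at the end of Section \ref{homeo-section}, naive choices of $T, S$ cause the $C^r$-norms of the factors $h_n^{T^n S^k}$ to blow up: conjugation by a linear contraction amplifies higher derivatives. To circumvent this, I would combine two ingredients. First, use simplicity of $\Diff^r_c(B)$ (Theorem \ref{simplicity thm}, whose hypothesis $r \neq \dim(M) + 1$ is exactly the one of the proposition) to relate the arbitrary elements $h_m$ to elements of an affine subgroup of $\Diff^r_c(B)$ provided by Lemma \ref{affine lem}. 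Second, exploit the affine relation $g^s = [f^t, g^{s/(e^t - 1)}]$ coming from Corollary \ref{affine cor}, which lets one replace any element of that affine subgroup by a commutator with one factor arbitrarily $C^r$-close to the identity. Combining these, one can repackage each $h_n$ in a Fisher-style encoding whose atomic pieces shrink in $C^r$ norm as fast as desired, making the product converge in $C^r$. The main technical effort will be the bookkeeping of this rewriting: one must simultaneously encode each $h_n$ as an accessible word in the output generators and ensure that the pieces entering the infinite product are small in $C^r$, without introducing new unbounded sequences of generators.
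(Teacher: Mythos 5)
Your overall architecture---fragment to a single ball via Lemma \ref{frag lem}, set up disjoint translates and a Fisher/Avila-type infinite product, and use the affine subgroups of Lemma \ref{affine lem} to gain $C^r$ control---is the same as the paper's. But the mechanism you propose for making the atomic pieces of the infinite product small does not work as stated, and the idea that actually closes this gap is missing. Corollary \ref{affine cor} does \emph{not} let you write an arbitrary element of an affine flow as a commutator with controlled factors: it applies only to elements $g^s$ already lying in a small neighborhood $\mathcal{V}$ of the identity, and then produces a commutator with \emph{both} factors small. What you invoke instead is the relation $g^s=[f^t,g^{s/(e^t-1)}]$ with $t$ large, which shrinks one factor at the cost of making the other factor $f^t$ large; moreover that large factor varies with $n$ (the affine subgroups differ from one element of the sequence to the next), so it can neither be fed into a $C^r$-convergent infinite product nor be absorbed into a fixed finite generating set. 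Your ``repackage each $h_n$ so the atomic pieces shrink as fast as desired'' asserts exactly the conclusion that this step fails to deliver.

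The paper's resolution is as follows. First use simplicity of $\Diff^r_c(B)$ (this is where $r\neq\dim(M)+1$ enters) together with Lemma \ref{obv lemma} to reduce to the case where each $f_n$ is itself the time-one map of an affine flow $g_n^s$. Fix the set $Z$ and the maps $S,T$ \emph{first}, and obtain from Lemma \ref{avila-lem} the thresholds $\epsilon_n$; then choose $k(n)$ so large that the root $g_n^{1/k(n)}$ lies in the neighborhood $\mathcal{V}$ of Corollary \ref{affine cor} and can be written as a single commutator $[a_n,b_n]$ with $\|a_n\|_r<\epsilon_n$ and $\|b_n\|_r<\epsilon_n$. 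Form the two single products $A=\prod_n S^n a_n S^{-n}$ and $B=\prod_n T^n b_n T^{-n}$ (convergent in $C^r$ precisely by the choice of $\epsilon_n$), and recover $f_n=\left[S^{-n}AS^n,\,T^{-n}BT^n\right]^{k(n)}$, a word in $\{A,B,S,T\}$; this also replaces your double product $\prod_{n,k}h_n^{T^nS^k}$ and its extraction identity, which would require norm control on conjugates of the original $h_n$ that you do not have. Note the power $k(n)$ is uncontrolled, which is exactly why the closed-manifold case yields only the Schreier property and not strong distortion; any completion of your sketch must likewise accept unbounded word length per $f_n$, since making the pieces $C^r$-small costs an unbounded number of letters.
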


We start with an obvious lemma.  
\begin{lemma}  \label{obv lemma}
Let $G$ be a group, and let $X$ be a generating set for $G$.  Then $G$ has the Schreier property if and only if, for every sequence $x_n \in X$, there exists a finite set ${\cal S} \subset G$ such that $\{x_n\} \subset \langle {\cal S} \rangle$.  
\end{lemma}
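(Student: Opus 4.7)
The statement is essentially tautological once one unpacks the definitions, so the main task is to write down the reduction carefully. Here is the plan.

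The forward direction is immediate: a sequence $\{x_n\} \subset X$ is in particular a countable subset of $G$, so if $G$ has the Schreier property then by definition there is a finitely generated subgroup $\langle \mathcal{S}\rangle$ with $\{x_n\} \subset \langle \mathcal{S}\rangle$. So only the reverse implication needs real argument.

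For the reverse direction, suppose $\{g_n\} \subset G$ is any countable subset, and we seek a finite $\mathcal{S} \subset G$ with $\{g_n\} \subset \langle \mathcal{S}\rangle$. Since $X$ generates $G$, each $g_n$ can be written as a finite word
\[
g_n = x_{n,1}^{\varepsilon_{n,1}} x_{n,2}^{\varepsilon_{n,2}} \cdots x_{n,k_n}^{\varepsilon_{n,k_n}}
\]
with each $x_{n,i} \in X$ and $\varepsilon_{n,i} = \pm 1$. The double-indexed collection $\{x_{n,i} : n \geq 1,\ 1 \leq i \leq k_n\}$ is countable, so using any bijection $\mathbb{N} \to \{(n,i) : 1 \leq i \leq k_n\}$ I can enumerate it as a single sequence $\{y_m\}_{m \geq 1}$ in $X$.

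Now apply the hypothesis to this sequence: there exists a finite set $\mathcal{S} \subset G$ such that $\{y_m\} \subset \langle \mathcal{S}\rangle$. Since $\langle \mathcal{S}\rangle$ is a subgroup, it is closed under inverses and products, so every $g_n$, being a word in the $y_m$ and their inverses, lies in $\langle \mathcal{S}\rangle$. This gives $\{g_n\} \subset \langle \mathcal{S}\rangle$, which is the Schreier property.

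There is no real obstacle here; the only small point worth flagging is the need to pass from a double-indexed family of generators (one word per $g_n$) to a single sequence in $X$, which is a routine countability argument. The lemma will be used to reduce verifying the Schreier property to verifying it on sequences drawn from a convenient generating set, such as the diffeomorphisms supplied by the fragmentation lemma (Lemma \ref{frag lem}).
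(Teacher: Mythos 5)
Your proposal is correct and follows essentially the same route as the paper: the forward direction is immediate, and for the converse one writes each element of the given countable set as a word in the generating set $X$, collects the (countably many) letters into a single sequence, and applies the hypothesis to that sequence. The only cosmetic difference is that you track inverses explicitly, which is harmless since $\langle \mathcal{S}\rangle$ is a subgroup.
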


\begin{proof} Let $G$ be a group generated by a subset $X$. The condition on sequences in $X$ is an immediate consequence of the Schreier property.  For the converse, assume $X$ has the property in the lemma.  Now if $f_n$ is an arbitrary sequence in $G$, we may write $f_n = f_{n, 1}... f_{n, j(n)}$ where each $f_{n, i} \in X$.  Now apply the assumption from the lemma to the countable set $\{f_{n, i}\}$.  This provides a set that ${\cal S}$ that generates each $f_n$. 
\end{proof}  

Now to prove the proposition. 

\begin{proof}
Fix a finite cover of $M$ by open balls.  The fragmentation lemma states that the set of diffeomorphisms whose support lies in a single element of the cover is a generating set for $\Diff_0^r(M)$.  By Lemma \ref{obv lemma} and the fact that the cover is finite, it  suffices to show that for an open ball $B$ and any sequence $\{f_n\} \subset \Diff_c^r(B)$, there exists a finite set $S \subset \Diff^r(M)$ such that $\{f_n\} \subset \langle S \rangle$.  

Since $\Diff_c^r(B)$ is simple, Lemma \ref{affine lem} implies that it is generated by the set 
$$\{g : g \text{ is the time 1 map of a flow } g^s \text{ from an affine subgroup}\}.$$  
Thus, again using Lemma \ref{obv lemma}, we can reduce to the case where each $f_n$ is the time one map of a flow $g^s_n$ from some affine subgroup.  
 
 
The next short lemma is based on an idea of Avila \cite{Avila}.    To fix terminology, let $M$ be a $C^r$ manifold, and let $B'$ be an embedded Euclidean ball in $M$, i.e. the image of a standard Euclidean ball by some $C^r$ diffeomorphism $\phi$.   This allows us to push forward the standard $C^r$ norm on $\Diff_c^r(\R^n)$ to $\Diff_c(B')$, the subset of $\Diff_c(M)$ consisting of diffeomorphisms supported on the interior of $B'$.  Abusing notation somewhat, we denote this push-forward $C^r$ norm by $\| f \|_r$.   Note that the induced left-invariant distance $d_r(f, g) := \|f^{-1}g\|_r$ on $\Diff_c(B')$ generates the topology of $\Diff_c(B') \subset \Diff_c(M)$.

 \begin{lemma} \label{avila-lem}
 Let $Z \subset M$ be an open set, and $T \in \Diff^r_0(M)$ such that the translates $T^n(Z)$ are pairwise disjoint and contained in an embedded ball $B'$.   Then there exist $\epsilon_n \to 0$ (depending on $T$) such that, if $a_n$ is a sequence of diffeomorphisms with $\|a_n \| < \epsilon_n$ and support on $Z$, then the infinite product $\prod_n T^n a_n T^{-n}$ is a $C^r$ diffeomorphism.   
\end{lemma}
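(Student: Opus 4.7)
The plan is to control the $C^r$ size of each conjugate $T^n a_n T^{-n}$ in terms of $\|a_n - \id\|_r$, to choose the $\epsilon_n$'s so that these sizes decay geometrically, and to recognize the infinite product as the uniform $C^r$-limit on $B'$ of its finite subproducts.

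First, I would establish the conjugation estimate. By the Faà di Bruno formula, every partial derivative of order $\leq r$ of the composition $T^n \circ a \circ T^{-n}$ is a universal polynomial in the derivatives of $T^n$, $T^{-n}$, and $a$ of order $\leq r$, evaluated at appropriate points. Since $T^n \circ \id \circ T^{-n} = \id$, this polynomial vanishes when $a = \id$; writing $a = \id + (a-\id)$ and expanding shows that every remaining monomial contains at least one factor that is a derivative of $a - \id$. Consequently, there is a continuous function $\Phi_n : [0,\infty) \to [0,\infty)$ with $\Phi_n(0) = 0$, depending only on $T$, $n$, and the $C^r$ bounds of $T^{\pm n}$ on a suitable compact set, such that
$$\| T^n a T^{-n} - \id \|_{C^r(B')} \leq \Phi_n(\| a - \id \|_{C^r}).$$
I then choose $\epsilon_n > 0$ small enough that $\Phi_n(\epsilon_n) \leq 2^{-n}$ and $\epsilon_n \to 0$, shrinking further if necessary so that $\|a\|_r < \epsilon_n$ also forces $\|a^{-1}\|_r$ to be small. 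Setting $h_n := T^n a_n T^{-n} - \id$, we obtain $\|h_n\|_{C^r(B')} \leq 2^{-n}$.

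Next I would assemble the product. Since the sets $T^n(Z)$ are pairwise disjoint and $\supp(h_n) \subseteq T^n(Z)$, at each point of $B'$ at most one $h_n$ is nonzero, so $f := \id + \sum_{n} h_n$ is unambiguously defined and coincides with $T^n a_n T^{-n}$ on $T^n(Z)$. The crucial observation is that $\supp(\partial^\alpha h_n) \subseteq \supp(h_n)$ for every multi-index $\alpha$ with $|\alpha| \leq r$, since the derivatives of a $C^r$ map vanish on any open set on which the map itself vanishes. Thus pairwise disjointness persists at the level of derivatives of order $\leq r$, giving
$$\Big\| \sum_{n \geq N} h_n \Big\|_{C^r(B')} \;=\; \sup_{n \geq N} \|h_n\|_{C^r(B')} \;\leq\; 2^{-N} \longrightarrow 0.$$
The partial sums are therefore Cauchy in the complete space $C^r(B')$, whose limit is $f - \id$; hence $f \in C^r(B')$. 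Applying the same construction to $\{a_n^{-1}\}$ produces a two-sided $C^r$ inverse of $f$, and extending $f$ by the identity off $B'$ yields the required element of $\Diff^r_0(M)$. The main technical hurdle is the $C^r$-regularity of $f$ at accumulation points of the disjoint family $\{T^n(Z)\}$ inside $B'$; the inclusion $\supp(\partial^\alpha h_n) \subseteq \supp(h_n)$ handles this cleanly by reducing the $C^r$-norm of a tail to the sup of its terms, sparing a direct verification that all partial derivatives of $f$ up to order $r$ vanish at such accumulation points.
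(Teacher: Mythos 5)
Your proposal is correct and follows essentially the same route as the paper: the paper simply invokes continuity of conjugation by $T^n$ to choose $\epsilon_n$ with $\|T^n a_n T^{-n}\|_r < 2^{-n}$ (your Fa\`a di Bruno estimate is just a quantitative proof of that continuity), and then concludes by Cauchy convergence in the $C^r$ topology, which by disjointness of supports is the same as your series $\id + \sum_n h_n$. Your additional care about the inverse (building it from $a_n^{-1}$) and about accumulation points of the supports is detail the paper leaves implicit, not a different method.
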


\begin{proof}
Fix $T \in \Diff^r_0(M)$ such that the translates $T^n(Z)$ are pairwise disjoint.   For each $n$, conjugation by $T^n$ is a continuous automorphism of $\Diff^r_0(M)$, so there exists $\epsilon_n$ such that, if $a_n$ has $C^r$-norm less than $\epsilon_n$, then $T^n a_n T^{-1}$ has $C^r$ norm less than $2^{-n}$.  Thus, for any such sequence $a_n$, the sequence 
$$
A_k := \prod \limits_{i=1}^k T^n a_n T^{-1}
$$
 is Cauchy, so converges in the $C^r$ topology to the diffeomorphism $\prod \limits_{n \in \N} T^n a_n T^{-n}$.  

\end{proof}

To apply this to our situation, let $Z \subset M$ be an open ball, and let $T$ and $S \in \Diff^r_0(M)$ be such that the translates $T^n(Z), S^m(Z)$ for $n \in \Z$ and $m \in \Z \setminus\{0\}$ are all pairwise disjoint.  If $\dim(M) = 1$, one can take $S$ and $T$ as in the proof of Proposition \ref{homeo-R prop}, the higher dimensional case is entirely analogous.   Using Lemma \ref{avila-lem}, let  $\epsilon_n$ be such that if $a_n$ and $b_n$ are sequences of diffeomorphisms with $\| a_n \|_r < \epsilon_n$ and $\| b_n \|_r < \epsilon_n$, then the infinite compositions  $\prod S^n a_n S^{-n}$ and $\prod T^n b_n T^{-n}$ are $C^r$ diffeomorphisms.  
By Corollary \ref{affine cor}, if we fix $k = k(n)$ sufficiently large, then we can write $g^{1/k}_n$ as a commutator $[a_n, b_n]$, such that the $\|a_n\|_r < \epsilon_n$ and $\|b_n\|_r < \epsilon_n$.   In this case, $g_n = [a_n, b_n]^{k(n)}$.

Now we apply Lemma \ref{avila-lem}.  Define $C^r$--diffeomorphisms $A$ and $B$ by 
$$A := \prod S^n a_n S^{-n}$$
$$B := \prod T^n b_n T^{-n}.$$
Note that the intersection of the supports of the maps  $S^{-n} A S^n$ and $T^{-n} B T^n$ is contained in $Z$, and on that set they coincide respectively with $a_{n}$ and $b_{n}$. Thus
 $[a_{n}, b_{n}] = [S^{-n} A S^n, T^{-n} B T^n]$
which shows that the sequence $g_n$ is generated by the set ${\cal S} = \{A, B, T, S\}$.   This completes the proof.  

\end{proof}

\subsection{Mapping class groups, extensions and quotients}  \label{MCG-sec}

To finish the proof of Theorem \ref{schrier-thm} for closed manifolds, we need to show $\Diff^r(M)$ has the Schreier property if and only if the mapping class group is finitely generated.  This is a direct consequence of the following observation.  

\begin{proposition}
If $G$ is a group with the Schreier property, then any quotient of $G$ has the Schreier property.  
If $A$ and $C$ are groups with the Schreier property, then any extension 
$1 \to A \to B \to C \to 1$ has the Schreier property.   

The same statements hold when the Schreier property is replaced by strong distortion.  
\end{proposition}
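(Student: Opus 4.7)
My plan is to handle quotients and extensions separately, and for each to treat the Schreier case first, then adapt the argument to strong distortion by tracking word lengths.

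For a quotient $\pi : G \to G/N$, I would start with a countable sequence $\bar g_n$ in $G/N$ and lift it to a sequence $g_n \in G$ with $\pi(g_n) = \bar g_n$. Applying the Schreier property of $G$ yields a finite $S \subset G$ with $\{g_n\} \subset \langle S\rangle$; then $\pi(S)$ is the desired finite subset of $G/N$. For strong distortion, if $m$ and $w_n$ witness strong distortion for $G$, then relations $g_n = W_n(s_1,\dots,s_m)$ project to $\bar g_n = W_n(\pi(s_1),\dots,\pi(s_m))$, and the same $m$ and $w_n$ witness strong distortion for $G/N$.

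For an extension $1 \to A \to B \to C \to 1$, let $b_n$ be a countable sequence in $B$. The first step is to project to a sequence $c_n = \pi(b_n) \in C$ and use the Schreier property of $C$ to find a finite set $T \subset C$ with $\{c_n\} \subset \langle T\rangle$. Fix any lift $\widetilde T \subset B$ of $T$. For each $n$, write $c_n = W_n(T)$ as a word in $T$; then $a_n := W_n(\widetilde T)^{-1} b_n$ lies in the kernel $A$. The sequence $a_n$ is countable, so the Schreier property of $A$ furnishes a finite $S \subset A$ with $\{a_n\}\subset \langle S\rangle$. Then $\{b_n\} \subset \langle S \cup \widetilde T\rangle$, as required. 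The same argument gives strong distortion: if $m_C, w_n^C$ and $m_A, w_n^A$ are the data for $C$ and $A$, one picks $|T| = m_C$ with $|W_n| \le w_n^C$, then finds $|S| = m_A$ expressing each $a_n$ as a word of length $\le w_n^A$ in $S$; concatenating yields a word of length $\le w_n^C + w_n^A$ for $b_n$ in $S \cup \widetilde T$ (padding to cardinality $m_A + m_C$ with extra copies of the identity if the lift collapses elements).

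The only point that requires any care is checking, in the strong-distortion/extension case, that the constants $m_A + m_C$ and $w_n^C + w_n^A$ are genuinely universal—that is, that they depend only on $A$ and $C$ and not on the chosen sequence $\{b_n\}$. This is immediate, since $m_A, w_n^A, m_C, w_n^C$ are fixed by the strong distortion hypotheses for $A$ and $C$, and the construction above simply applies the hypothesis once to each factor in turn. Beyond this bookkeeping, there is no essential obstacle; the proposition reduces to careful manipulation of lifts.
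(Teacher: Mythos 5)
Your proof is correct and follows essentially the same route as the paper: quotients are handled by projecting the finite set (or the words $W_n$), and extensions by lifting a generating set for the images in $C$, observing that the discrepancy $W_n(\widetilde T)^{-1}b_n$ lies in $A$, and then applying the hypothesis to $A$, with the length bound $w_n^A + w_n^C$ in the strong distortion case. Your added remarks on padding the generating set to a fixed cardinality and on the universality of $m_A+m_C$ and $w_n^A+w_n^C$ are correct bookkeeping that the paper leaves implicit.
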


\begin{proof} 
The first statement is immediate from the definition of the property.  For the second statement, given a sequence $b_n \in B$, let $S_1 \subset C$ be a finite set generating the images of $b_n$ in $C$, and let $S'_1$ be a transversal for $S_{1}$ in $B$.   Then, for each $n$ there exists $a_n \in A$ such that $a_n b_n \in \langle S'_1 \rangle$.  Let $S_2 \subset A \subset B$ be a finite set generating $\{a_n\}$, and let $S = S'_1 \cup S_2$.  

In the case where $A$ and $C$ have strong distortion (say with sequences $w_n^A$ and $w_n^C$, respectively), choosing $S'_1$ such that the images of $b_n$ in $C$ have length at most $w_n^C$ in $S_1$, and $S_2$ such that $a_n$ has length at most $w_n^A$ in $S_2$, shows that $B$ is strongly distorted with sequence $w_n^A +w_n^C$. 
\end{proof}

Now our claim about mapping class groups follows from the fact that a countable group has the Schreier property if and only if it is finitely generated, and that the mapping class group is the quotient of $\Diff^r(M)$ by $\Diff^r_0(M)$.  
 \qed

Note that examples where mapping class groups cannot be finitely generated do indeed occur: for one concrete example, Hatcher \cite{Hatcher} and Hsiang--Sharpe \cite{HS} have independently computed the mapping class group $\Diff^\infty(\mathbb{T}^5)/\Diff^\infty_0(\mathbb{T}^5) $, and it is not finitely generated.

\section{Strong distortion for $\Diff^r_0(\R^n)$}  \label{Rn-sec}

In this section we will prove the following result.

\begin{theorem}\label{theo.diff.R} (Strong distortion for $\Diff^r_{0}(\R^d)$)
Let $0 \leq r \leq \infty, r \neq d+1$, and let $\{f_n\} \subset \Diff^r_{0}(\R^d)$.  Then there exists a set ${\cal S} \subset \Diff^r_{0}(\R^d)$ with 17 elements, such that each $f_n$ can be written as a word of length $50n+24$ in ${\cal S}$.  
\end{theorem}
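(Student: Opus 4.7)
The plan is to adapt the homeomorphism argument of Proposition \ref{homeo-R prop} to the $C^r$ setting by combining three ingredients already developed in earlier sections: the fragmentation, the Fisher packaging of Construction \ref{fisher-construction}, and the affine-subgroup-plus-Avila trick. First I would invoke a higher-dimensional analogue of Lemma \ref{kgh-lemma} (the lemma referenced in the text as Lemma \ref{lemma.fragmentation.annuli}) to factor $f_n = g_n h_n k_n$, where $k_n$ has compact support and $g_n, h_n$ are supported on standard infinite unions of pairwise disjoint closed balls (or annuli) going off to infinity in $\R^d$. This reduces the theorem to producing, for each of the three sequences $\{g_n\}, \{h_n\}, \{k_n\}$ separately, a finite generating set in which the elements have word length linear in $n$.

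For the compactly supported sequence $\{k_n\}$, I would follow the template of Proposition \ref{schrier-compact}. A shift diffeomorphism $d$ brings the supports into a fixed ball $K_1$, so that $a_n := d^n k_n d^{-n}$ is supported in $K_1$. Finite fragmentation within $K_1$ combined with the affine subgroup structure of Lemma \ref{affine lem} permits writing each $a_n$ as a short product of commutators $[a_n', b_n']$ whose factors have arbitrarily small $C^r$ norm (Corollary \ref{affine cor}). The small-norm factors are then assembled by the Fisher--Avila construction into fixed $C^r$ diffeomorphisms $A, B$: infinite products of the form $A = \prod_{n,m \geq 0} T^n S^m a_n' (T^n S^m)^{-1}$ converge by Lemma \ref{avila-lem}, and the commutator identity of Construction \ref{fisher-construction} extracts each $a_n'$ as $[A^{T^{-n}}, S]$. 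Conjugating back by $d^{\pm n}$ yields $k_n$ as a word of length linear in $n$ in a fixed set of about five generators. For $\{g_n\}$ and $\{h_n\}$ no shift is needed, since their supports already lie on well-separated balls going to infinity; translating generators $T', S'$ adapted to $X$ and $T'', S''$ adapted to $Y$, together with analogous affine-plus-Avila packagings, supply the remaining generators.

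The principal obstacle is keeping the word length truly linear in the index $n$ rather than in the size of $f_n$. In the closed-manifold Schreier argument of Proposition \ref{schrier-compact} one writes an affine flow element as $[a_n, b_n]^{k(n)}$ with $k(n)$ depending on the element, which yields only the Schreier property and destroys strong distortion. The extra freedom available in $\R^d$ is that the fragmentation distributes each $f_n$ across many disjoint translates going to infinity, so each individual piece can be chosen uniformly small in $C^r$ norm and hence realized as a \emph{single} commutator of elements below Avila's convergence threshold, with no $k(n)$ factor appearing. Making this uniform smallness rigorous, and carefully tallying the generators (a shift, a Fisher pair, and a commutator pair for each of the three factors, plus an affine generator where needed) against the linear contributions of $d^{\pm n}, T^{\pm n}, S^{\pm n}$ in each Fisher identity, is the bookkeeping that yields the claimed constants $17$ and $50n+24$. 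The hypothesis $r \neq d+1$ enters only through Theorem \ref{simplicity thm}, exactly as in Proposition \ref{schrier-compact}, to legitimize the reduction to affine subgroups on embedded balls.
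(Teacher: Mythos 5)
Your outline matches the paper's skeleton (the fragmentation into $k_n g_n h_n$ via Lemma \ref{lemma.fragmentation.annuli}, and packaging via infinite products of conjugates by two maps $\hat S,\hat T$), but the mechanism you propose for producing commutators breaks down, and it is exactly the point where the paper needs a different tool. You claim that because the fragmentation distributes $f_n$ over annuli going off to infinity, ``each individual piece can be chosen uniformly small in $C^r$ norm and hence realized as a single commutator of elements below Avila's convergence threshold.'' This is not true: the restriction of $g_n$ to an annulus $\hat I_k$ is an arbitrary compactly supported $C^r$ diffeomorphism of that annulus, and nothing in the construction of $X$ and $Y$ makes it small -- the annuli are far away, but the maps on them are whatever $f_n$ does there. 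The affine-subgroup device (Lemma \ref{affine lem}, Corollary \ref{affine cor}) only expresses flow elements \emph{near the identity} as a single commutator of small elements; to reach a general element you must either take powers $[a_n,b_n]^{k(n)}$ or fragment into an uncontrolled number $j(n)$ of generators (Lemma \ref{obv lemma}), and in either case the word length is no longer linear in $n$. That is precisely why Proposition \ref{schrier-compact} yields only the Schreier property and not strong distortion, and your treatment of $\{k_n\}$ by conjugating into a fixed ball and re-running that template inherits the same defect (conjugation by the shift does not shrink $C^r$ norms either).

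The paper's proof replaces this step by the Burago--Ivanov--Polterovich theorem (Theorem 1.18 of \cite{BIP}): each annulus is a portable manifold $\mathbb{S}^{d-1}\times\R$, so \emph{every} element of $\Diff^r_c$ of it, however large, is a product of exactly two commutators -- a uniform bound that is the real source of the constants $17$ and $50n+24$ (and the place where $r\neq d+1$ enters, via perfectness). Correspondingly, convergence of the infinite products $A=\prod \hat S^n a_n \hat S^{-n}$ is \emph{not} obtained from Lemma \ref{avila-lem} and small norms, but from property 1 of Lemma \ref{lemma.avila.sets}: the supports $\hat S^n \hat I_k$ form a locally finite family, so the product is a locally finite composition and automatically a $C^r$ diffeomorphism with no norm hypothesis at all. (This is also why the delicate one-dimensional construction of $S$, $T$ and the intervals $I_k$ in Lemma \ref{lemma.avila.sets} is needed, whereas your sketch treats the choice of translating generators as routine.) Without the uniform two-commutator bound, or some substitute giving commutator expressions of length independent of the size of the diffeomorphism, your argument proves the Schreier property for $\Diff^r_0(\R^d)$ but not the strong distortion statement of Theorem \ref{theo.diff.R}.
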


Since $\Diff^r_0(\R^d)$ is the index two subgroup of orientation preserving $C^r$ diffeomorphisms in $\Diff^r(\R^d)$, an argument as in Section \ref{MCG-sec} implies that $\Diff^r(\R^d)$ is strongly distorted also.

To prove Theorem \ref{theo.diff.R}, we additionally need the following theorem of Burago, Ivanov, and Polterovich.  

\begin{theorem*}[Theorem 1.18 in \cite{BIP}]
Let $M$ be a manifold diffeomorphic to a product $M' \times \R^{n-1}$.  If $\Diff^r_c(M)$ is perfect, then any element may be written as the product of two commutators.
\end{theorem*}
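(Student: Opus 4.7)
The approach is a ``displacement trick'' enabled by the product structure $M = M' \times \R^{n-1}$ (we use $n-1 \geq 1$, so that there is at least one $\R$-direction available for translation). Perfectness gives a finite decomposition $f = [a_1, b_1] [a_2, b_2] \cdots [a_N, b_N]$ with $a_i, b_i \in \Diff_c^r(M)$ and $N = N(f)$; the plan is to use translations along an $\R$-factor to combine these many commutators into just two, independent of $N$.

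Concretely, let $K$ be a compact set containing $\supp(a_i) \cup \supp(b_i)$ for every $i$. Using a large translation in one of the $\R$-factors of $M$, cut off outside a still larger compact set, we obtain $\sigma \in \Diff_c^r(M)$ such that the translates $K, \sigma(K), \ldots, \sigma^{N-1}(K)$ are pairwise disjoint. Set
$$
A := \prod_{i=1}^N \sigma^{i-1} a_i \sigma^{-(i-1)}, \qquad B := \prod_{i=1}^N \sigma^{i-1} b_i \sigma^{-(i-1)}.
$$
Since the factors in each product have pairwise disjoint supports, they commute pairwise, and iterating the direct-product identity $[xx', yy'] = [x,y][x',y']$ (valid whenever $\{x,y\}$ and $\{x',y'\}$ have disjoint supports) yields
$$
[A, B] = \prod_{i=1}^N \sigma^{i-1} [a_i, b_i] \sigma^{-(i-1)}.
$$
This is a ``spread-out'' copy of $f$: the $i$-th commutator of $f$ has been transported to the translate $\sigma^{i-1}(K)$. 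This is the first of the two commutators in the intended decomposition.

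The heart of the argument is to exhibit the ``collection error'' $g := [A,B]^{-1} \cdot f$ as a single commutator $[C, D]$, for then $f = [A,B]\cdot [C,D]$ as required. On each translate $\sigma^i(K)$ with $i \geq 1$, one computes that $g$ restricts to $\sigma^i [a_{i+1}, b_{i+1}]^{-1} \sigma^{-i}$, which is already a single commutator; the pieces on distinct translates combine into a single commutator via the same disjoint-supports identity. The real obstacle is the piece of $g$ supported on $K$, which equals the residual product $[a_2, b_2] \cdots [a_N, b_N]$ of $N-1$ commutators and is not visibly a single commutator. The resolution, due to Burago–Ivanov–Polterovich, is either to use a second, independent displacement direction in $\R^{n-1}$ (available when $n - 1 \geq 2$) to spread the residual product in a direction orthogonal to $\sigma$, or, in the case $n-1 = 1$, to build the two commutators $[A,B]$ and $[C,D]$ simultaneously so that each absorbs half of the telescoping structure. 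In either implementation, the two commutators play complementary roles, dividing between them the full displacement freedom provided by the $\R^{n-1}$-factor; this symmetric construction is what permits the global count to be bounded by $2$ rather than growing with $N$.
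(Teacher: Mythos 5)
First, a point of reference: the paper does not prove this statement at all --- it is imported verbatim as Theorem 1.18 of \cite{BIP}, with only the remark that the proof there never uses smoothness and so applies in the $C^r$ category. Your proposal is therefore a reconstruction of the Burago--Ivanov--Polterovich argument, and as such it has a genuine gap at exactly the decisive step. The setup is fine: perfectness gives $f=[a_1,b_1]\cdots[a_N,b_N]$ with all supports in a compact $K$, the product structure $M\cong M'\times\R^{n-1}$ gives a compactly supported $\sigma$ whose iterates displace $K$ disjointly, and your computation of $[A,B]=\prod_i\sigma^{i-1}[a_i,b_i]\sigma^{-(i-1)}$ and of the error $g=[A,B]^{-1}f$ is correct. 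But at that point nothing has been gained: the piece of $g$ supported on $K$ is $[a_2,b_2]\cdots[a_N,b_N]$, a product of $N-1$ commutators, i.e.\ the original problem with $N$ replaced by $N-1$. Neither of your proposed remedies is an argument: spreading the residual product in a second $\R$-direction reproduces the same residual-product difficulty there, and ``building the two commutators simultaneously so that each absorbs half of the telescoping structure'' is a hope, not a construction. As written, the proof does not close, and the bound of two commutators is never actually obtained.

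The missing idea is that one must conjugate the partial \emph{tails} of $f$ into the successive translates, not the individual commutators, and use the displacement $\sigma$ itself as one entry of a commutator, so that the shift by one translate telescopes. Concretely, with $K,\sigma(K),\dots,\sigma^{N}(K)$ pairwise disjoint, set $P_i=[a_i,b_i]\cdots[a_N,b_N]$ (so $P_1=f$, $P_{N+1}=\mathrm{id}$) and $C=\prod_{i=1}^{N}\sigma^{i-1}P_i\sigma^{-(i-1)}$, a finite product and hence compactly supported. Disjointness of the translates gives that $[C,\sigma]=C\,(\sigma C^{-1}\sigma^{-1})$ restricts to $P_1=f$ on $K$ and to $\sigma^{i}\bigl(P_{i+1}P_i^{-1}\bigr)\sigma^{-i}=\sigma^{i}[a_i,b_i]^{-1}\sigma^{-i}$ on $\sigma^{i}(K)$ for $1\le i\le N$. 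Hence $f=[C,\sigma]\cdot\prod_{i=1}^{N}\sigma^{i}[a_i,b_i]\sigma^{-i}$, and the last factor collapses --- by exactly the disjoint-support identity you invoke --- into the single commutator $[A',B']$ with $A'=\prod_i\sigma^{i}a_i\sigma^{-i}$, $B'=\prod_i\sigma^{i}b_i\sigma^{-i}$. This exhibits $f$ as a product of two commutators of compactly supported $C^r$ diffeomorphisms; only finitely many disjoint translates are needed, which is why $\sigma$ can be cut off to lie in $\Diff^r_c(M)$ and why the argument is insensitive to the regularity $r$, as the paper observes when quoting the theorem.
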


This theorem applies in the more general context where $M$ is a ``portable manifold", but we only need this special case here.  The statement in \cite{BIP} is given for $C^\infty$ diffeomorphisms, but the proof does not use smoothness and applies directly to the $C^r$ case, for any $r$.  

The uniform bound on commutator length from Burago--Ivanov--Polterovich will help us control word length in the proof of strong boundedness.  The other major tool towards this end is a variant of  Lemma \ref{avila-lem} avoiding the earlier hypothesis that the norms of diffeomorphisms $a_n$ are bounded by a sequence tending to zero.  Instead, we will use the unboundedness of $\R^d$ to displace supports so as to avoid accumulation points.  This is the purpose of the next technical lemma.  

\begin{lemma}\label{lemma.avila.sets}
There exist $S,T \in \Diff_{0}^\infty(\R)$ that are the identity on $(-\infty, 0]$, and a sequence $\{I_{k}\}_{k \geq 0}$ of intervals in $(0,+\infty)$, such that

\begin{enumerate}
\item the family
$
\{S^i I_{k_{1}}, T^j I_{k_{2}} , \ \ i, j, k_{1}, k_{2} \geq 0 \}
$
is locally finite, and
\item the intervals 
$S^i I_{k_{1}}, T^j I_{k_{2}}$ for $ i, j \in \Z$ and $k_{1}, k_{2} \geq 0$
are pairwise disjoint (with the trivial exception of $S^0 I_{k} = T^0_1 I_{k} = I_k$ for all $k$).
\end{enumerate}
\end{lemma}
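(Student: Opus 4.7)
The plan is to fix $S$ as a simple translation-type diffeomorphism and construct $T$ together with the intervals $\{I_k\}$ by an explicit sparse placement designed to make the verification of local finiteness and disjointness essentially combinatorial.

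Concretely, I take $S(x) = x+1$ for $x \geq 1$, smoothly interpolated to the identity on $(-\infty,0]$, so that every forward $S$-orbit in $[1,\infty)$ is the arithmetic progression $\{x+i : i \geq 0\}$. I then pick anchors $p_k \in (0,\infty)$ going rapidly to infinity and small widths $\epsilon_k > 0$, and set $I_k = (p_k, p_k + \epsilon_k)$. For $T$, the key idea is to make it a \emph{piecewise pure translation}: in small pairwise disjoint neighborhoods of prescribed orbit points $q_n^{(k)} := T^n(p_k)$ the map $T$ is to act by $x \mapsto x + c_n^{(k)}$ for positive step-sizes $c_n^{(k)}$, and between these neighborhoods $T$ is extended by any monotone smooth interpolation to a $C^\infty$ diffeomorphism of $\R$ equal to the identity on $(-\infty,0]$. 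The plateau feature ensures $|T^n I_k| = \epsilon_k$ for every $n \in \Z$, which rules out the length-blowup problem that obstructs naive conjugation constructions (such as $T = \psi S \psi^{-1}$ with $\psi$ expanding).

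The disjointness condition then reduces to the elementary numerical requirement that the countable collection of positions $\{p_k + i : i \in \Z, k \geq 0\} \cup \{q_n^{(k)} : n \in \Z, k \geq 0\}$ is pairwise $(\epsilon_k + \epsilon_{k'})$-separated, apart from the trivial identification $q_0^{(k)} = p_k$. The positions $q_n^{(k)}$ are controlled by the partial sums $s_n^{(k)} := \sum_{m=1}^n c_m^{(k)}$ for $n \geq 1$, with an analogous choice for $n \leq 0$ arranged so that the backward $T$-orbit accumulates at $0^+$ (i.e.\ $\sum_{m \leq 0} c_m^{(k)} = p_k$). I construct the anchors and the $c_n^{(k)}$'s inductively: at each stage the already-forbidden set is a countable, locally-finite union of tiny intervals in $(0,\infty)$, so its complement is open and dense and admissible choices always exist. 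Choosing the $p_k$'s with, say, $\Q$-algebraically independent fractional parts further prevents any collision between $S$-orbits of distinct anchors.

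Local finiteness of the forward family $\{S^i I_{k_1}, T^j I_{k_2} : i,j,k_1,k_2 \geq 0\}$ then follows because for each $k$ both sequences $\{p_k + i\}_{i \geq 0}$ and $\{q_n^{(k)}\}_{n \geq 0}$ escape to $+\infty$, and $p_k \to +\infty$ as $k \to \infty$, so any bounded subset of $\R$ meets only finitely many intervals. The main technical point, which I expect to be the chief obstacle in turning this plan into a complete proof, is the production of $T$ as a single smooth diffeomorphism once its orbit structure has been prescribed; the plateau-and-smooth-interpolation design is intended to make this a routine smooth-extension argument, with wide spacing of the anchors ensuring that interpolation regions associated to different $k$ do not interact.
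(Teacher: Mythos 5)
Your plan founders on the structural choice of letting the backward orbits of $S$ and of $T$ accumulate at the \emph{same} point $0^+$, and two of the claims on which your verification rests are false for that reason. First, the plateau claim $|T^nI_k|=\epsilon_k$ for every $n\in\Z$ is impossible: since $T=\id$ on $(-\infty,0]$, the backward orbit $T^{-n}(p_k)$ converges to a fixed point of $T$ in $[0,p_k)$ (you prescribe $0$), so the pairwise disjoint intervals $T^{-n}I_k$ all lie in the bounded set $(0,p_k+\epsilon_k)$ and their lengths must tend to $0$; they cannot all have length $\epsilon_k$. For the same reason, reducing disjointness to $(\epsilon_k+\epsilon_{k'})$-separation of the positions $\{p_k+i:i\in\Z\}\cup\{q_n^{(k)}\}$ is wrong in the backward direction (and $p_k+i$ is not even the backward $S$-orbit once it enters the interpolation region $(0,1)$).

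Second, and more seriously, the assertion that ``at each stage the already-forbidden set is a countable, locally finite union of tiny intervals'' fails precisely at the stage where a new anchor $p_k$ is placed. The requirement $S^{-i}I_k\cap T^{-j}I_{k'}=\emptyset$ for all $i,j\geq 1$ and all previously constructed $k'$ means $p_k$ must avoid (a thickening of) $\bigcup_{i,j\geq 1}S^{i}T^{-j}I_{k'}$. Because the intervals $T^{-j}I_{k'}$ accumulate at $0$ and every forward $S$-orbit sweeps from near $0$ up to $+\infty$ in steps of size at most $1$, every interval of length greater than $1$ in $(0,\infty)$ meets sets $S^{i}T^{-j}I_{k'}$ for infinitely many $j$: this family is not locally finite, so ``the complement is open and dense'' is unjustified. (A measure/summability argument could conceivably salvage the step, but it needs quantitative control of the backward contraction of $T$ and is a genuinely different argument from the one you give.) The paper's proof avoids this by a device you are missing: $S$ is taken affine with a fixed point at $2$ and $T$ with a fixed point at $1$, the intervals being placed above $3$; then the backward $S$-tails stay in $(2,\infty)$ while the backward $T$-tails accumulate at $1$, so the two infinite tails are separated by the barrier at $2$, and at each inductive step only finitely many ``dangerous'' intervals must be dodged --- which is done by choosing the new point outside a locally finite union, modifying $T$ slightly, and taking $I_k$ sufficiently small. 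Finally, even granting the orbit data, you would still need to check that prescribing $T$ near countably many orbit points accumulating at its fixed point (where $T$ must be $C^\infty$-tangent to the identity) is order-compatible and extends smoothly; with the constant-length plateaus gone this is part of the missing analysis, not a routine extension.
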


Figure \ref{avila-fig} gives a graphical description of properties 1 and 2 from Lemma \ref{lemma.avila.sets}.  The figure shows a configuration of rectangles $I_k$ in $\R^2$, and their images under diffeomorphisms $S$ and $T$, that satisfy both properties.  It is much harder to achieve this configuration for intervals in $\R$; this is the technical work in proof of the lemma.  

\begin{figure}[ht]
 \labellist 
  \scriptsize \hair 2pt
   \pinlabel $I_0$ at 140 175
   \pinlabel $S$ at  175 215
   \pinlabel $\nearrow$ at 178 200
   \pinlabel $S(I_0)$ at 205 200
   \pinlabel $T$ at 175 133
   \pinlabel $\searrow$ at 178 150
   \pinlabel $T(I_0)$ at 205 145
   \pinlabel $\rightarrow$ at 235 210
   \pinlabel $\rightarrow$ at 235 138
     \pinlabel $\rightarrow$ at 296 210
   \pinlabel $\rightarrow$ at 296 138
   \pinlabel $\nearrow$ at 108 150
   \pinlabel $\searrow$ at 108 195
   \pinlabel $S$ at  110 210
    \pinlabel $T$ at 110 135
      \pinlabel $\rightarrow$ at 60 200
   \pinlabel $\rightarrow$ at 60 145
   
   \pinlabel $S^2(I_0)$ at 265 210
   \pinlabel $T^2(I_0)$ at 265 138
   \pinlabel $I_1$ at 140 50
     \pinlabel $S$ at  175 85
   \pinlabel $\nearrow$ at 178 70
    \pinlabel $T$ at 175 10
   \pinlabel $\searrow$ at 178 25
    \pinlabel $\rightarrow$ at 235 85
   \pinlabel $\rightarrow$ at 235 15
       \pinlabel $\rightarrow$ at 296 85
   \pinlabel $\rightarrow$ at 296 15
      \pinlabel $\nearrow$ at 108 30
   \pinlabel $\searrow$ at 108 70
   \pinlabel $S$ at  110 83
    \pinlabel $T$ at 110 15
    \pinlabel $\rightarrow$ at 60 75
   \pinlabel $\rightarrow$ at 60 20
   
   \pinlabel $\dots$ at 360 210
   \pinlabel $\dots$ at 360 138
   \pinlabel $\dots$ at 360 85
   \pinlabel $\dots$ at 360 15   
   \pinlabel $\vdots$ at 140 0
     \endlabellist
  \centerline{
    \mbox{\includegraphics[width=3.6in]{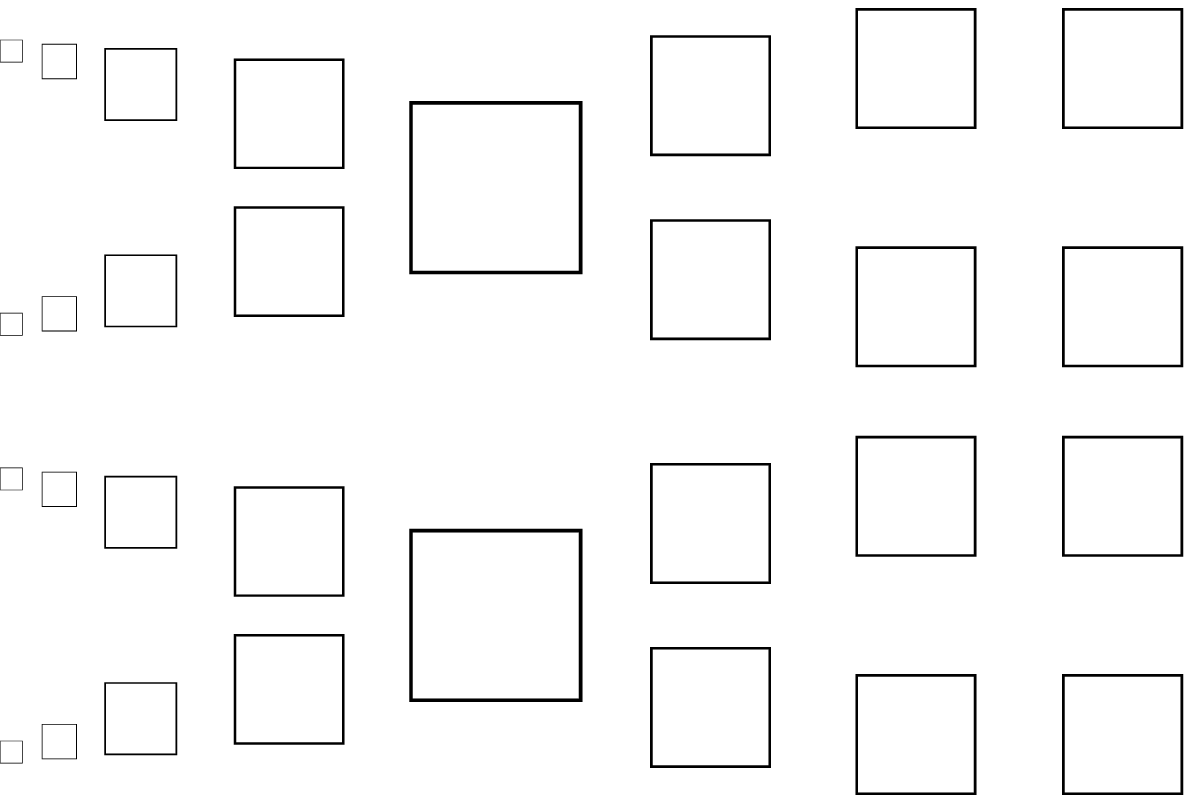}}
    \vspace{10pt} }
\caption{\small Configuration of cubes in $\R^2$ satisfying the properties of Lemma \ref{lemma.avila.sets}}
\label{avila-fig}
\end{figure}

\begin{proof}
Let $S$ be a smooth diffeomorphism of the line which is the identity on $(-\infty, 0]$, and which coincides with an affine map fixing $2$, say $x \mapsto 2(x-2)+2$, on $[2,+\infty)$.
Similarly, let $T_{0}$ be a smooth diffeomorphism of the line which is the identity on $(-\infty, 0]$, which coincides with $S$ on $[3,+\infty)$, fixes $1$, and has no fixed point in $(1,+\infty)$.  
Note that for every point $x > 2$, the sequence $S^{-n}(x)$ converges to $2$ as $n \to \infty$, while the sequence $T^{-n}_{0}(x)$ converges to $1$.  

We will define the intervals $I_0$, $I_1$, $I_2$,... iteratively, modifying $T_{0}$ at each step to produce diffeomorphisms $T'_0$, $T'_1$, $T'_2$,... , designed to converge to a diffeomorphism $T$ with our desired properties.  

Take any point $x_{0} \geq 3$ such that $x_0 \notin \{T_0^{k}(2) : k>0\}$.  $I_0$ will be the closure of a small neighborhood of $x_0$, of size to be determined after the construction of $T'_0$.  For this, we consider the backward iterates of $x_0$ under $T_0$ and $S$.  If there is no common iterate, i.e. $\{S^{-n} (x_{0}) : n > 0 \} \cap \{T_0^{-n} (x_{0}): n > 0 \} = \emptyset$,  then we let $T'_{0} = T_{0}$.  Otherwise, we modify $T_{0}$ as follows. Choose $x'_{0}$  close to $x_{0}$ outside the countable set $\{T_{0}^m S^{-n} (x_{0}) :  n, m>0\}$, so that the backward iterates of $x'_{0}$ under $T_{0}$ are disjoint from the backward iterates of $x_{0}$ under $S$. Then modify $T_{0}$ near $T_{0}^{-1} (x'_{0})$ to obtain a map $T'_{0}$ such that $T'_{0} (T_{0}^{-1} (x'_{0})) = x_{0}$.  
This can be done, for instance, by taking a diffeomorphism $h$ with support on a small neighborhood of $T_{0}^{-1} (x'_{0})$ 
disjoint from $\{T_{0}^{-m} (x_{0}) : m>1\}$, 
and setting $T'_0 = h \circ T_0$.

Now the backward iterates of $x_{0}$ under ${T'}_{0}$ coincide with the backward iterates of $x'_{0}$ under $T_{0}$,
and thus are disjoint from the backward iterates of $x_{0}$ under $S$.  
Note that by choosing $x'_{0}$ sufficiently close to $x_{0}$ we may keep the property that $x_{0} \notin \{{T'_0}^{k}(2) : k>0\}$.  

Since $\{ {T'_{0}}^{-n}(x_{0}) : n > 0\} \cap [2, x_{0}]$ 
is finite and does not contain $2$, and since $[2, x_{0}]$ contains $\{S^{-n} (x_{0}) : n > 0 \}$, if $I_{0}$ is a sufficiently small neighborhood of $x_0$, then every image ${T'_{0}}^{-n}(I_0)$ will be disjoint from $\bigcup_{n>0} S^{-n}(I_0)$.  Fix any such interval $I_0$.  

At this point, the $T'_{0}$-forward iterates $\{ {T'_{0}}^{n}(I_0) : n > 0\}$ of $I_{0}$ coincide with its $S$-forward iterates.  
We now further modify $T'_{0}$ so that they are pairwise disjoint from the iterates under $S$.  To do this, fix a small neighborhood $U$ of $S(I_{0})$ 
so that $U \cap S(U)= \emptyset$,
and let $I'_0$ be a small interval in $U$ 
disjoint from $S(I_{0})$.
Then all the $S$-forward iterates of $I'_{0}$ and $S(I_{0})$ are disjoint. Now modify $T'_{0}$  by postcomposing it with a diffeomorphism $h$ supported on $U$ and such that $h(I_0) = I'_0$. Call this new map $T_1$, and note that $T_1(I_0) = I'_0$.  We have achieved the following properties:

\begin{enumerate}[i)]
\item the family
$
\{S^i I_{0}, T^j_1 I_{0} : \ \ i, j  \geq 0 \}
$
is locally finite,
\item the intervals
$S^i I_{0}, T^j_1 I_{0}$ for $i, j \in \Z$
are pairwise disjoint, with the trivial exception of $S^0 I_{0} = T^0_1 I_{0} = I_0$.  
\end{enumerate}

\bigskip

Let $Z_0$ be the union of the intervals in the family from ii) above.
We define the interval $I_{1}$ by a similar procedure to that of $I_0$. Choose some point $x_{1} > S(x_{0})$, outside $Z_0$, which is not a forward iterate of the point $2$ under $T_{1}$. As before, modify $T_{1}$ near $T_{1}^{-1}(x_{1})$ if necessary to obtain a map $T'_{1}$ so that the set of backward iterates of $x_{1}$ under $T'_{1}$ is disjoint from the set of backward iterates of $x_{1}$ under $S$.  The same argument as above implies that we may find a small interval $I_{1}$ around $x_{1}$, taken sufficiently small so that it is disjoint from the set $Z_0$, such that every  $T'_{1}$-backward iterate of $I_{1}$ is disjoint from every $S$-backward iterate of $I_{1}$.  As the forward iterates of $I_{1}$ under $T'_{1}$ and under $S$ coincide we now modify $T'_{1}$ in a neighborhood of $I_{1}$, to get a map $T_{2}$ with the property that all the $T_{2}$-forward iterates of $I_{1}$ are disjoint and disjoint from its $S$-forward iterates.

We repeat the same process iteratively.  At the $k^{th}$ step, choose $x_k > S(x_{k-1})$, modify the already defined $T_k$ to $T'_{k}$ as above in order to be able to choose a suitable small neighborhood $I_k$ of $x_k$ and then modify $T'_k$ by composing with a diffeomorphism supported on a neighborhood of $S(I_k)$ to get $T_{k+1}$ so that the following properties hold: 
\begin{enumerate}[i)]
\item the family
$
\{S^i I_{m}, T_{k+1}^j I_{m} : \ \ i, j  \geq 0, m \leq k \}    
$
is locally finite, and
\item the intervals in the family
$
S^i I_{m}, T_{k+1}^j I_{m}$ for  $i, j \in \Z, m \leq k
$
are pairwise disjoint, with the trivial exception $S^0 I_{m} = T^0 I_{m}$.  
\end{enumerate}

Since, at each step, we choose $I_{k}$ to be a small interval about a point $x_k \geq S(x_{k-1})$, the sequence of intervals $\{I_{k}\}$ is locally finite.  And since on every compact subset $K$ of the line, all but a finite number of the maps $T_{k}$ agree, the sequence $\{T_{k}\}$ converges to an element $T$ of $\Diff^\infty(\R)$.   
By construction, these maps $T,S$ and the sequence 
$\{I_{k}\}$ satisfy properties 1. and 2. from the statement of the lemma.
\end{proof}

The next step is a natural generalization of Lemma \ref{kgh-lemma}.  However, since we are now working in higher dimensions, we need to use the annulus theorem (proved by Kirby \cite{Kirby} and Quinn \cite{Quinn} for the difficult case $r=0$).  As an alternative to the annulus theorem, one can use the related Edwards--Kirby theory of deformations of embeddings.  We will take this latter approach in the next section, for now we use the more familiar annulus theorem directly.   The precise consequence that we need is the following.  

\begin{lemma}[consequence of the annulus theorem] \label{annulus lem}
Let $B_1 \subset B_2 \subset B_3 \subset B_4$ be standard Euclidean closed balls in $\R^d$ centered at $0$ with pairwise disjoint boundaries.  Let $A$ be the annulus $B_3 \setminus \interior(B_2)$.  
Suppose $f \in \Diff^r_0(\R^d)$ satisfies $f(A) \subset \interior (B_4) \setminus B_1$, and that $f(A)$ is homotopically essential in the annulus $\interior (B_4) \setminus B_1$.  Then there exists $h \in \Diff^r_0(\R^d)$ supported on $B_4 \setminus B_1$ that agrees with $f$ on $A$. 
\end{lemma}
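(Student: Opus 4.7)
The plan is to construct $h$ piecewise: set $h = \id$ on $\overline{B_1} \cup (\R^d \setminus \interior(B_4))$ and $h = f$ on $A$, then extend smoothly across the two annular ``gap'' regions $R := B_2 \setminus \interior(B_1)$ and $R' := B_4 \setminus \interior(B_3)$. By the homotopically essential hypothesis, the spheres $f(\partial B_2)$ and $f(\partial B_3)$ each separate $\partial B_1$ from $\partial B_4$ in $\R^d$. Applying the annulus theorem, the closure $R_1$ of the region between $\partial B_1$ and $f(\partial B_2)$, and the closure $R_2$ of the region between $f(\partial B_3)$ and $\partial B_4$, are each $C^r$-diffeomorphic to $S^{d-1} \times [0,1]$. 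Since $h$ must be a diffeomorphism of $\R^d$, the restriction $\Phi_1 := h|_R$ is forced to be a diffeomorphism $R \to R_1$ with boundary values $\Phi_1|_{\partial B_1} = \id$ and $\Phi_1|_{\partial B_2} = f|_{\partial B_2}$, and similarly $\Phi_2 := h|_{R'}: R' \to R_2$ with boundary values $f|_{\partial B_3}$ and $\id$.

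To construct $\Phi_1$ (with $\Phi_2$ entirely analogous), I would parametrize $R$ radially as $S^{d-1} \times [0,1]$ and parametrize $R_1$ as $S^{d-1} \times [0,1]$ via the annulus theorem, choosing the identifications to agree with the natural radial identification on $\partial B_1$. Under these coordinates $\Phi_1$ corresponds to a self-diffeomorphism of $S^{d-1} \times [0,1]$ that is the identity on $S^{d-1} \times \{0\}$ and equals some $g \in \Diff^r(S^{d-1})$ on $S^{d-1} \times \{1\}$, where $g$ is determined by $f|_{\partial B_2}$ together with the chosen annulus parametrization of $R_1$. Such a $\Phi_1$ can be produced explicitly as $\Phi_1(y, t) = (g_t(y), t)$ for any $C^r$-isotopy $g_t$ from $\id$ to $g$.

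The main obstacle is to verify that $g$ lies in the identity component of $\Diff^r(S^{d-1})$, so that the isotopy $g_t$ exists. For $d \leq 2$ this is automatic, since $\Diff^r(S^{d-1})$ is then path-connected. For higher $d$ the a priori obstruction class lives in $\pi_0(\Diff^r(S^{d-1}))$, and I would use the hypothesis $f \in \Diff^r_0(\R^d)$ to show that it vanishes: an ambient isotopy $F_t$ from $\id$ to $f$ restricts to an isotopy of embeddings $F_t|_{\partial B_2}: \partial B_2 \to \R^d$ starting at the inclusion and ending at $f|_{\partial B_2}$, and combined with the annulus theorem this should allow one to identify $g$ with a diffeomorphism in the identity component (for instance by an isotopy extension argument performed inside the open annulus $\interior(B_4) \setminus B_1$, using that the space of essential embeddings of $\partial B_2$ into this annulus is path-connected). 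This is the step requiring the most care.

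Finally, gluing $\Phi_1$, $\Phi_2$, $f|_A$, and the identity extensions on $\overline{B_1}$ and $\R^d \setminus \interior(B_4)$ yields a $C^r$ diffeomorphism $h$ supported on $B_4 \setminus B_1$. Because the construction proceeds through isotopies starting at the identity, $h \in \Diff^r_0(\R^d)$, as required.
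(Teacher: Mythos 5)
Your overall decomposition is the same as the paper's: fill in the two gap regions between $\partial B_1$ and $f(\partial B_2)$, and between $f(\partial B_3)$ and $\partial B_4$, by diffeomorphisms with prescribed boundary values, which is exactly the content of the "corollary of the annulus theorem" the paper invokes. The genuine gap is in the step you yourself flag as delicate: your proposed argument that the sphere discrepancy $g$ lies in $\Diff^r_0(S^{d-1})$ rests on the claim that the space of essential embeddings of $\partial B_2$ into the open annulus $\interior(B_4)\setminus B_1$ is path-connected. For $r\geq 1$ and large $d$ this is false: if $g\in\Diff^r(S^{d-1})$ is not isotopic to the identity (recall $\pi_0\Diff^+(S^{d-1})\cong\Theta_d$ is nontrivial, e.g.\ for $d=7$), then the embedding $\iota\circ g$, with $\iota$ the standard inclusion, is essential but not isotopic to $\iota$ inside the annulus -- an isotopy would, by isotopy extension, produce an ambient diffeomorphism preserving the standard sphere and restricting to $g$ on it, forcing $g$ to extend over the ball and hence be isotopic to the identity. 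Your other suggestion, restricting an ambient isotopy $F_t$ from $\id$ to $f$ to $\partial B_2$, does give a path of embeddings from $\iota$ to $f|_{\partial B_2}$, but this path has no reason to stay inside $\interior(B_4)\setminus B_1$, so it cannot directly be fed into an isotopy extension argument performed in the annulus. As written, the vanishing of the obstruction is therefore not established.

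The way to close the gap (and, implicitly, what the paper's cited corollary encodes) is to never isolate a boundary-sphere parametrization at all: match $f$ on whole embedded balls. Since $f|_{B_3}$ and $f^{-1}|_{B_1}$ are embeddings of balls into $\interior(B_4)$ and $\interior(f(B_2))$ respectively, the uniqueness of embedded balls (Palais--Cerf disc theorem plus compactly supported isotopy extension for $r\geq 1$; Kirby--Quinn annulus theorem or Edwards--Kirby deformation theory for $r=0$) gives ambient diffeomorphisms supported in those regions agreeing with $f$ (resp.\ $f^{-1}$) on the entire smaller ball; because one matches $f$ on the ball rather than only a prescribed parametrization of its boundary sphere, the potential $\pi_0\Diff(S^{d-1})$ obstruction never arises -- the discrepancy automatically extends over the ball since $f$ does. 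The paper then simply takes $h=h_2h_1$ as a composition of two globally defined diffeomorphisms, which also sidesteps a secondary issue in your write-up: gluing $f|_A$, the identity, and product-type fillings $(y,t)\mapsto(g_t(y),t)$ along the four seam spheres is in general only continuous unless you arrange the $C^r$ jets to match there (e.g.\ via collars and isotopies constant near the ends), a point you do not address.
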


\begin{proof}
Let $B(R)$ denote the standard Euclidean ball of radius $R$.   It is a standard corollary of the annulus theorem that, 
if $\gamma$ is a $C^r$ embedding of $B(\frac{1}{2})$ into $B(1)$, then $B(1) \setminus \interior(\gamma(B(\frac{1}{2})))$ is $C^r$-diffeomorphic  to $B(1) \setminus \interior(B(\frac{1}{2}))$.  Moreover, the diffeomorphism can be taken to agree (meaning to agree up to order $r$) with the identity on $\partial B$ and agree with $\gamma$ on $\partial B(\frac{1}{2})$.    

This means that, given $f$ as in the lemma, we may find $h_1: B_4 \setminus B_3 \to B_4 \setminus f(B_3)$ that is the identity on $\partial B_4$ and agrees with $f$ on $\partial B_3$.  Extend $h_1$ to a homeomorphism of $\R^d$ that agrees with $f$ on $B_3$ and the identity outside of $B_4$.   By the same argument, we may find $h_2$ that agrees with the identity on $f(\partial B_2)$ and agrees with $f^{-1}$ on $f(\partial B_1)$; extend $h_2$ to be the identity outside of $f(B_2)$ and agree with $f^{-1}$ on $f(B_1)$.  Now $h:= h_2 h_1$ is supported on $B_4 \setminus B_1$ and agrees with $f$ on $A$.  
\end{proof}

\begin{lemma}\label{lemma.fragmentation.annuli}
Let $\{f_n\} \subset \Diff_0(\R^d)$.  
There exists sets $X$ and $Y$, each a union of a locally finite family of disjoint concentric annuli, such that we can write each element $f_n$ as a product $f_n = k_n g_n h_n$, where each $k_n$ has compact support, $\supp(g_n) \subset X$, and $\supp(h_n) \subset Y$.
\end{lemma}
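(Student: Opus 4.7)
The proof is a direct higher-dimensional analog of Lemma \ref{kgh-lemma}, with intervals replaced by concentric annuli and the elementary construction of a homeomorphism of $\R$ supported on a prescribed interval replaced by an application of the annulus theorem packaged as Lemma \ref{annulus lem}. The rough idea is to inductively build, in between the ``fat'' annuli making up $X$, thin spherical shells $\Sigma_n$ whose images under $f_0, \dots, f_n$ can all be realized by diffeomorphisms supported in a single component of $X$; the complement of the shells will be $Y$.

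\emph{Step 1: inductive construction.} I will choose radii $0 < a_1^- < z_1 < a_1^+ < a_2^- < z_2 < a_2^+ < \cdots$ and small $\epsilon_n > 0$, and set
\[
X_n = \{a_n^- \le |x| \le a_n^+\}, \qquad \Sigma_n = \{z_n - \epsilon_n \le |x| \le z_n + \epsilon_n\}.
\]
Given everything up through $a_{n-1}^+$, set $a_n^- = a_{n-1}^+ + 1$; then pick $z_n$ so large that the sphere $\{|x| = z_n\}$ together with its images under each of $f_0^{\pm 1}, \dots, f_n^{\pm 1}$ lies in $\{|x| > a_n^-\}$; shrink $\epsilon_n$ so that the same holds for $\Sigma_n$ and each $f_j(\Sigma_n)$ with $j \le n$; finally pick $a_n^+$ large enough that $\bigcup_{j \le n} f_j(\Sigma_n) \subset \{|x| < a_n^+\}$. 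Set $X = \bigsqcup_n X_n$ and $Y = \R^d \setminus \bigcup_n \Sigma_n$; both are locally finite unions of disjoint concentric annuli (treating the innermost ball of $Y$, which we will never touch, as a harmless extra component).

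\emph{Step 2: building $g_n$ via the annulus theorem.} For each $m \ge n$, apply Lemma \ref{annulus lem} to $f_n$ with $B_2, B_3$ the boundary spheres of $\Sigma_m$ and $B_1, B_4$ the boundary spheres of $X_m$: the hypothesis that $f_n(\Sigma_m) \subset X_m$ is essential holds because $f_n$ is isotopic to the identity, so $f_n(\Sigma_m)$ separates the origin from infinity and hence separates the two boundary spheres of $X_m$. This yields $\tilde g_{n,m} \in \Diff^r_0(\R^d)$ supported in $X_m$ and agreeing with $f_n$ on $\Sigma_m$. Since the $X_m$ are pairwise disjoint, the product $g_n := \prod_{m \ge n} \tilde g_{n,m}$ is a well-defined element of $\Diff^r_0(\R^d)$ supported in $X$, coinciding with $f_n$ on every $\Sigma_m$ for $m \ge n$.

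\emph{Steps 3 and 4: splitting $g_n^{-1}f_n$ across $\Sigma_n$ and checking supports.} The map $\phi_n := g_n^{-1} f_n$ fixes $\Sigma_n$ pointwise and is isotopic to the identity, so it preserves the bounded component $U_n = \{|x| < z_n - \epsilon_n\}$ and the unbounded component $V_n = \{|x| > z_n + \epsilon_n\}$ of $\R^d \setminus \Sigma_n$. Define $k_n$ to be $\phi_n$ on $\overline{U_n}$ and the identity on $\overline{V_n}$, and $h_n$ to be the identity on $\overline{U_n}$ and $\phi_n$ on $\overline{V_n}$; these patch together smoothly because $\phi_n = \id$ on $\Sigma_n$, they commute, and $\phi_n = h_n k_n$. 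By construction $k_n$ has compact support in $\overline{U_n}$. The map $h_n$ is supported in $\overline{V_n}$, and on each $\Sigma_m$ with $m > n$ it equals $\phi_n$, which is the identity; hence $\supp(h_n) \subset \R^d \setminus \bigcup_m \Sigma_m \subset Y$. This yields $f_n = g_n h_n k_n$, which can be rewritten as $f_n = (g_n k_n g_n^{-1})\, g_n\, h_n$; renaming $k'_n := g_n k_n g_n^{-1}$ (still compactly supported) gives the factorization $f_n = k'_n g_n h_n$ in the order claimed.

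\emph{Main obstacle.} The one substantive issue is ensuring that Lemma \ref{annulus lem} can be applied in Step 2: one needs each $f_j(\Sigma_m)$ to be both contained in $X_m$ and essential there. Containment is arranged by the bookkeeping in Step 1, and essentialness comes for free from the hypothesis $f_j \in \Diff^r_0(\R^d)$. The rest is a careful but routine diagonalization, closely mirroring the one-dimensional argument.
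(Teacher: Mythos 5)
Your overall scheme is the same as the paper's: inductively nest thin shells $\Sigma_m$ inside fat annuli $X_m$ adapted to $f_0,\dots,f_m$, apply Lemma \ref{annulus lem} componentwise to get a diffeomorphism supported in $X=\bigsqcup_m X_m$ that copies $f_n$ on every $\Sigma_m$ with $m\ge n$, and then split the remainder along $\Sigma_n$ into a compactly supported piece and a piece supported off the shells. The splitting, the smooth gluing along $\Sigma_n$ (using that $g_n^{-1}f_n$ is the identity on the whole shell, not just setwise invariant), the infinite product over the disjoint, locally finite $X_m$, the reordering by conjugation, and the remark about the central ball of $Y$ are all fine.

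There is, however, one genuine gap, and it sits exactly where you flag the ``main obstacle'': your inductive conditions do not imply the essentialness hypothesis of Lemma \ref{annulus lem}. You only ask that $f_j^{\pm 1}$ of the sphere $\{|x|=z_m\}$ (and $f_j(\Sigma_m)$) \emph{avoid} $\{|x|\le a_m^-\}$. Avoidance does not force enclosure: if, say, $f_j$ acts like a translation by a huge vector $v$ and $a_m^- < z_m < |v|-a_m^-$, then all your stated conditions at step $m$ can hold (with $a_m^+$ chosen large afterwards), yet $f_j(\Sigma_m)$ is a shell centered far from the origin whose bounded complementary component contains neither $0$ nor the sphere $\{|x|=a_m^-\}$; it is inessential in $X_m$ and Lemma \ref{annulus lem} cannot be applied. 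Correspondingly, the inference ``$f_n(\Sigma_m)$ separates the origin from infinity, hence separates the two boundary spheres of $X_m$'' is not valid as written: enclosing the origin is neither guaranteed by your conditions nor sufficient by itself --- what is needed is that $\{|x|=a_m^-\}$ lies in the bounded complementary component of $f_j(\Sigma_m)$ (the outer sphere then lies in the unbounded one automatically, by a radial-ray argument, once $f_j(\Sigma_m)\subset\interior X_m$). The repair is exactly the nesting condition the paper imposes directly: when choosing $z_m$ and $\epsilon_m$, also require $\overline{B}(a_m^-)\subset f_j\left(\{|x|<z_m-\epsilon_m\}\right)$ for all $j\le m$, equivalently $z_m-\epsilon_m>\sup\{|f_j^{-1}(x)| : |x|\le a_m^-,\ j\le m\}$; this is again achieved by taking $z_m$ sufficiently large, and with that added clause the rest of your argument goes through and coincides with the paper's proof.
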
 

\begin{proof}
Similarly to the proof of Lemma \ref{kgh-lemma}, we first construct two sequences of concentric annuli.  For $R> 0$, let $B(R)$ denote the closed ball of radius $R$ about 0 in $\R^d$.  
The annuli will be defined by 
$$
A_{N} = B(R_{N}^+) \setminus \interior B(R_{N}^-), \ \ \  A'_{N} = B({R'}^+_{N}) \setminus \interior B({R'}^-_{N}) \ \ \ (N\geq0)
$$
and have the properties that 
\begin{itemize}
\item the annuli $A'_{N}, N\geq0$ are pairwise disjoint,
\item for every $N\geq0$, $A_{N}$ is contained in $A'_{N}$, 
\item for every $N\geq0$ and for every $n \leq N$, 
\begin{itemize}
\item  $B({R'}^-_{N})$ is contained in the interior of $f_{n}(B(R^-_{N})$,
\item $f_{n}(B(R^+_{N}))$ is contained in the interior of $B({R'}^+_{N})$.
\end{itemize}
\end{itemize}
Note that the last point says that $f_{n}(A_{N})$ is contained in $A'_{N}$ in a homotopically essential way.

We construct these annuli by induction, the procedure is quite analogous to that in Lemma \ref{kgh-lemma}.  First set ${R'}^-_{0}=1$, then choose $R^-_{0}$ large enough so that the ball $B({R'}^-_{0})$ is contained in the interior of $f_{0}(B(R^-_{0}))$, then choose for $R^+_{0}$ any number larger that $R^-_{0}+1$, and finally choose ${R'}^+_{0}$ large enough so that $f_{0}(B(R^+_{0}))$ is contained in the interior of  $B({R'}^+_{0})$.
Now assume that the annuli have been constructed up to step $N$, satisfying the above properties.
We construct $A_{N+1}$ and $A'_{N+1}$ as follows. 
First choose ${R'}^-_{N+1}$ greater than ${R'}^+_{N}$.
Then choose $R^-_{N+1}$ large enough so that for every $n=0, \dots , N+1$,  the ball $B({R'}^-_{N+1})$ is contained in  the interior of $f_{n}(B(R^-_{N+1}))$.
 Then choose for $R^+_{N+1}$ any number larger that $R^-_{N+1}+1$.
 Finally choose ${R'}^+_{N+1}$ large enough so that for every $n=0, \dots , N+1$, the set $f_{n}(B(R^+_{N+1}))$ is contained in  the interior of $B({R'}^+_{N+1})$.

Now let us fix some $n \geq 0$, and define the maps $k_{n}, g_{n}$ and $h_{n}$ as follows.   The property that, for any $N \geq n$, the annulus $f_{n}(A_{N})$ is contained in $A'_{N}$ in a homotopically essential way
means that we can use Lemma \ref{annulus lem} to find $h_n \in \Diff^r(\R^d)$ supported in the disjoint union $X := \cup_N A'_{N}$, and that coincides with $f_{n}$ on a neighborhood of each $A_{N}$ with $N \geq n$.  Fix such an $h_n$.   Let $k_{n}$ agree with $f_{n} h_{n}^{-1}$ on the ball $B(R^-_{n})$, and be the identity elsewhere.  Define  $g_{n}$ to be the restriction of $f_{n} h_{n}^{-1}$ to the complement of this ball, and the identity elsewhere. Note that  $f_{n} = k_{n} g_{n} h_{n}$, and that $g_{n}$ is compactly supported in the disjoint union of annuli
$$
Y := \bigcup_{N \geq n} B(R^-_{N+1}) \setminus \interior B(R^+_{N}), 
$$
this proves the lemma. 
\end{proof}

\begin{proof}[Proof of Theorem~\ref{theo.diff.R}]
Let $\{f_{n}\}$ be a sequence in $\Diff_{0}^r(\R^d)$. 
We first apply Lemma \ref{lemma.fragmentation.annuli}, to get two sets $X, Y$ and for each $n$ a decomposition $f_{n} = k_n g_n h_n$, with $\supp(k_n)$ compact, $\supp(g_n) \subset X$ and $\supp(h_n) \subset Y$. 

We first take care of the sequence $\{g_{n}\}$ supported in $X$. Apply Lemma \ref{lemma.avila.sets} to get maps $S,T \in \Diff_{0}^\infty(\R)$ and a sequence $\{I_{k}\}$ of intervals in $(0,+\infty)$. Using polar coordinates, we identify $\R^d \setminus \{0\}$ with $\R \times \mathbb{S}^{d-1}$, and let 
$$
\hat I_{k} = I_{k} \times \mathbb{S}^{d-1}, \ \ \hat S = S \times \mathrm{Id},  \ \ \hat T = T \times \mathrm{Id}.
$$
Note that since $S$ and $T$ are the identity near $-\infty$, the maps $\hat S, \hat T$ extends to smooth diffeomorphisms of $\R^d$ fixing $0$. Also note that properties 1 and 2 of Lemma~\ref{lemma.avila.sets} still hold if we replace 
$\{I_{k}\}$, $S$ and $T$ by the sequence of annuli $\{\hat I_{k}\}$ and the maps $\hat S, \hat T$.

Since $\{\hat I_{k}\}$ is a locally finite sequence of concentric pairwise disjoint annuli, there exists a diffeomorphism that sends the union of the $\hat I_{k}$'s onto a neighborhood of the set $X$. Up to conjugating by this diffeomorphism, we may assume that $X = \cup_{k \geq 0} \hat I_{k}$, and each $g_{n}$ is supported in the interior of $X$.

We now appeal to Burago-Ivanov-Polterovich's theorem stated above: for each fixed $n$ and $k$ we may write the restriction of $g_{n}$ to  $\hat I_{k}$ as a product of two commutators of diffeomorphisms supported in $\hat I_{k}$. Since the $\hat I_{k}$ are pairwise disjoint, we may take composition over $k$ and get $C^r$ diffeomorphisms  $a_n, b_n, a'_n$ and $b'_n$ supported in the union of the $\hat I_{k}$, such that  $g_n = [a_{n}, b_{n}] [a'_{n}, b'_{n}]$.

We work first with the sequence $\{a_n\}$ and $\{b_n\}$ applying the same strategy from the compact manifold case.  

Let 
$$
A = \prod_{n \geq 0} \hat S ^n a_{n} \hat S ^{-n}, \ \ \ B = \prod_{n \geq 0} \hat T ^n b_{n} \hat T ^{-n}.
$$
Note that these infinite products define \emph{diffeomorphisms} of $\R^d$, because of local finiteness of the supports (property 1 of Lemma~\ref{lemma.avila.sets}.)  Now Property 2 of the same lemma implies that for every $n \geq 0$ we have
$$
[a_{n}, b_{n}] = [\hat S^{-n} A \hat S^n, \hat T^{-n} B \hat T^n].
$$

The same strategy (and the same $\hat S$ and $\hat T$) can be used to give $A'$ and $B'$ such that $[a'_{n}, b'_{n}] = [\hat S^{-n} A' \hat S^n, \hat T^{-n} B' \hat T^n]$.

We have just shown that any sequence $\{g_n\}$ supported in $X$ can be written as a word in $\{\hat S, \hat T, A,B,A',B' \}$ of length $2(4(2n+1))$.   We can do the same for the sequence $\{f_n\}$ supported in $Y$, writing each as a word of length $16n+8$ in a set of 6 different elements, say $\{\hat S_2, \hat T_2, A_2,B_2,A'_2, B'_2 \}$.  It remains only to treat the sequence $\{k_{n}\}$.
Let $B(r_n)$ be a sequence of nested balls of increasing radii such that $\supp(k_n) \subset B(r_n)$.  Fix a ball $K_0 \subset X$, and let $\phi \in \Diff_{0}^r(\R^d)$ be a diffeomorphism such that, for every $n \geq 0$, we have $\phi^n(B(r_n)) \subset K_0$.  Then $\phi^{-n} k_{n} \phi^n$ is supported in $K_0 \subset X$, so the same argument for the sequence $\{g_n\}$ applies to $\{\phi^{-n} k_{n} \phi^n\}$; in fact, we may even use the same diffeomorphisms $\hat S$ and  $\hat T$.  This gives a set $\{\hat S, \hat T, A_3,B_3,A'_3, B'_3 \}$ so that each $\phi^n k_n \phi^{-n}$ can be written as a word of length $16n+8$. 

Thus, taking $\mathcal{S} : = \{\phi, \hat S, \hat T, \hat S_2, \hat T_2, A, B, A', B', A_i,B_i,A'_i, B'_i : i = 2, 3 \}$ as a generating set, $\ell_{\mathcal{S}}(k_n) \leq 18n+8$.  Combined with the estimates above, this gives 
$\ell_{\mathcal{S}}(f_{n}) \leq 50n+24$.
This completes the proof.
\end{proof}

\section{The Schreier property for $\Diff^r_0(M)$, $M$ noncompact} \label{noncompact-sec}

This short section gives the necessarily generalizations to Theorem \ref{theo.diff.R} in order to prove the following.  

\begin{proposition} \label{open-M-schrier}
Let $M$ be an open manifold diffeomorphic to the interior of a compact manifold with boundary.  Then $\Diff_0(M)$ has the Schreier property.  

In the special case that $M \cong N \times \R^k$ for some compact manifold $N$, then $\Diff_0(M)$ is also strongly distorted.  
\end{proposition}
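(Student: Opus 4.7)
The plan is to extend the proof of Theorem~\ref{theo.diff.R} using the collar structure of $M$ in place of the concentric annuli of $\R^d$. Let $\bar M$ be a compact manifold with boundary whose interior is $M$, and let $N_1,\dots,N_p$ denote the connected components of $\partial \bar M$. By the collar neighborhood theorem we may identify $M$ with $M_0 \cup \bigsqcup_{i=1}^p (N_i \times [0,\infty))$, where $M_0$ is a compact codimension zero submanifold. Each end $N_i \times [0,\infty)$ supplies the ``room at infinity'' that powered the infinite-product constructions in the proof of Theorem~\ref{theo.diff.R}.

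The first step is an analog of Lemma~\ref{lemma.fragmentation.annuli}: given a sequence $\{f_n\} \subset \Diff^r_0(M)$, I would decompose each $f_n$ as $f_n = k_n \, g_n^{(1)} \cdots g_n^{(p)}$, where $k_n$ has compact support and $g_n^{(i)}$ is supported in the $i$-th end $N_i \times [0,\infty)$. Within each end one further writes $g_n^{(i)}$ as a product of two diffeomorphisms, each supported in a locally finite family of pairwise disjoint collar annuli $N_i \times [a_j,b_j]$, the radii $a_j,b_j$ being chosen inductively so that for every $n$ the image $f_n(N_i \times [a_j,b_j])$ sits in an essential way inside a slightly larger annulus of the same form. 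Producing this decomposition is the main technical obstacle: as in the proof of Lemma~\ref{lemma.fragmentation.annuli}, it requires the collar neighborhood theorem together with the annulus theorem (or, in the $C^0$ case, Edwards--Kirby deformation theory as in Lemma~\ref{annulus lem}) to move the supports into the desired standard positions.

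Once the decomposition is in hand, each sequence $\{g_n^{(i)}\}$ is handled by the same argument used for the end pieces in Theorem~\ref{theo.diff.R}: the interior of each collar annulus is diffeomorphic to $N_i \times \R$, so the Burago--Ivanov--Polterovich theorem lets one write each restriction of $g_n^{(i)}$ as a product of two commutators of diffeomorphisms supported in that annulus, and the Avila--Fisher construction of Lemmas~\ref{avila-lem} and~\ref{lemma.avila.sets}, performed along the $[0,\infty)$ direction of the end, then shows that all the $g_n^{(i)}$ lie in a fixed finitely generated subgroup. The compactly supported sequence $\{k_n\}$ is treated by the conjugation trick from the end of the proof of Theorem~\ref{theo.diff.R}: choose $\phi \in \Diff^r_0(M)$ pushing every $\supp(k_n)$ into a fixed ball $K_0 \subset M_0$, so that each $\phi^n k_n \phi^{-n}$ is supported in $K_0$, and apply (the proof of) Proposition~\ref{schrier-compact} to that sequence; adjoining $\phi$ to the resulting finite generating set gives one for $\{k_n\}$.

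For the strong distortion assertion, when $M \cong N \times \R^k$ with $N$ closed there is a global product structure, and the diffeomorphisms playing the roles of $\phi$, $\hat S$, $\hat T$ can be chosen to act on the $\R^k$ factor by translations or dilations and trivially on $N$. With this choice, every step of the argument admits control uniform in $n$, exactly paralleling the bookkeeping in the proof of Theorem~\ref{theo.diff.R}, and yields a finite generating set $\mathcal{S}$ with $\ell_{\mathcal{S}}(f_n) = O(n)$ as required by the definition of strong distortion.
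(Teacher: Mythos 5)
Your decomposition of $f_n$ into a compactly supported piece $k_n$ and end-supported pieces handled by Burago--Ivanov--Polterovich together with Lemma \ref{lemma.avila.sets} is the same route the paper takes (the paper treats all ends simultaneously via $\partial M\times\R$ and, for positioning supports, uses the isotopy-trace statement of Lemma \ref{isotopy-lem} rather than the annulus theorem: for collar pieces $N_i\times[a,b]$ the annulus theorem of Lemma \ref{annulus lem} is not available, so the Edwards--Kirby-type Lemma \ref{isotopy-lem} is the tool you actually need there).

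The genuine gap is your treatment of the compactly supported sequence $\{k_n\}$ in the general case. You propose a $\phi\in\Diff^r_0(M)$ with $\phi^n(\supp k_n)\subset K_0$ for a fixed ball $K_0\subset M_0$, but no such $\phi$ (indeed no sequence of diffeomorphisms of $M$ at all) exists once $M$ has any nontrivial topology: by construction $\supp(k_n)$ contains the compact core $M_0$, so if, say, $\pi_1(M)\neq 1$ it contains a loop essential in $M$; its image under any diffeomorphism of $M$ is still essential, whereas every loop inside the ball $K_0$ is null-homotopic in $M$. So the conjugation trick only works when $M$ is exhausted by balls (essentially the $\R^d$ case already covered by Theorem \ref{theo.diff.R}), or, as in the product case $M\cong N\times\R^k$, when $\supp(k_n)\subset K_n\times N$ can be pushed into a collar annulus $A'_0$. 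The paper flags exactly this obstruction -- in general $\supp(k_n)$ is not even displaceable, so one cannot imitate the $\R^d$ argument via Lemma \ref{lemma.avila.sets} -- and instead handles $\{k_n\}$ by the argument of Proposition \ref{schrier-compact} (fragmentation, simplicity and affine subgroups, and the convergent infinite products of Lemma \ref{avila-lem}), which requires no displacement of supports but yields no word-length control; this is precisely why strong distortion is asserted only in the case $M\cong N\times\R^k$, where your argument (and the paper's) does go through. As written, your proof of the general Schreier statement fails at this step and should be replaced by an appeal to (the proof of) Proposition \ref{schrier-compact} applied directly to the sequence $\{k_n\}$.
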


The proof of this proposition follows the same strategy as the $\R^n$ case, but in place of the annulus theorem, we use the following related result (which is a difficult theorem in the $C^0$ case).    Recall that the \emph{trace} of an isotopy $f^t$, $t \in [0,1]$ of a set $C$ is defined to be $\bigcup_{t \in [0,1]} f^t(C)$

\begin{lemma} \label{isotopy-lem}
Let $f \in \Diff^r_0(M)$, let $f^t$ be an isotopy from $\id = f^0$ to $f = f^1$, and let $C \subset M$ be a compact set.  Given a neighborhood $U$ of the trace of $C$ under $f^t$, there exists $g \in \Diff^r_0(M)$ supported on $U$ and agreeing with $f$ on $C$.  
\end{lemma}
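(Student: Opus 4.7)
The plan is to replace the given isotopy $f^t$ by a new isotopy $g^t$ supported in $U$ that still produces the same motion on $C$. Set $T := \bigcup_{t\in[0,1]} f^t(C)$; this is a compact subset of $U$. Since any trajectory $t \mapsto f^t(x)$ with $x \in C$ is entirely contained in $T$, cutting off the motion outside a thin neighborhood of $T$ should not disturb the action on $C$, and should confine the resulting isotopy to $U$.

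For the case $r \geq 1$, I would carry out this cut-off at the level of vector fields. Pick an open set $V$ with $T \subset V \subset \overline{V} \subset U$ and a $C^r$ bump function $\rho: M \to [0,1]$ with $\rho \equiv 1$ on $V$ and $\supp(\rho) \subset U$. The isotopy $f^t$ is generated by a time-dependent $C^r$ vector field $X_t$, and I would let $g^t$ be the isotopy generated by $\rho X_t$. By construction $g^t$ is supported in $U$ for every $t$, so $g := g^1 \in \Diff^r_0(M)$ has support in $U$. For any $x \in C$, the curve $t \mapsto f^t(x)$ lies in $T \subset V$, where $\rho \equiv 1$ and therefore $\rho X_t = X_t$; so this curve is also an integral curve of $\rho X_t$. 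Uniqueness of ODE solutions forces $g^t(x) = f^t(x)$ for all $t$, hence $g$ agrees with $f$ on $C$.

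For the case $r = 0$, vector-field cut-offs are unavailable and one must argue purely topologically. Here I would invoke the Edwards--Kirby deformation-of-embeddings theorem, whose relevant consequence is that a sufficiently small isotopy of a compact subset of $M$ can be realized by an ambient homeomorphism supported in any prescribed neighborhood of its trace. To apply it, I would subdivide $[0,1]$ into $0 = t_0 < t_1 < \cdots < t_N = 1$ finely enough that each intermediate isotopy $\{f^t \circ (f^{t_{i-1}})^{-1}\}_{t \in [t_{i-1}, t_i]}$ of the compact set $f^{t_{i-1}}(C)$ is small in the Edwards--Kirby sense and has trace contained in a small open neighborhood of $T$ lying in $U$. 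Edwards--Kirby then gives, for each $i$, an ambient homeomorphism supported in this neighborhood that agrees on $f^{t_{i-1}}(C)$ with the corresponding piece of $f^t$. Composing the $N$ homeomorphisms, I obtain $g \in \Homeo_0(M)$ supported in $U$ with $g|_C = f|_C$.

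The main obstacle is clearly the $C^0$ case. In the $C^r$ case with $r \geq 1$ the cut-off argument is routine; in the topological case the subtlety is that Edwards--Kirby is a \emph{local} statement, so one must take care both to subdivide time finely enough for the smallness hypothesis to apply to each step and to verify that the accumulated supports of the step-by-step ambient homeomorphisms remain inside $U$. Given the Edwards--Kirby machinery as a black box, this is essentially bookkeeping.
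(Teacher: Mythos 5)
Your proposal is correct and matches the paper's argument: for $r \geq 1$ the paper likewise cuts off the generating time-dependent vector field $\frac{\partial}{\partial t}f^t$ by a bump function equal to $1$ on the trace and supported in $U$, and for $r = 0$ it appeals to the Edwards--Kirby deformation-of-embeddings theory (citing the statement directly as the generalization of Cor.\ 1.2 noted on p.\ 79 of \cite{EK}, rather than re-deriving it by time subdivision as you sketch). So this is essentially the same proof, with your $C^0$ discussion just unpacking the cited reference.
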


\begin{proof}
The $C^0$ case follows from the embedding theory of Edwards and Kirby, this statement is exactly the generalization of \cite[Cor. 1.2]{EK} explained in the second remark of \cite[p. 79]{EK}.   The case for $r \geq 1$ is easy:  one thinks of $\frac{\partial}{\partial t}f^t$ as defining a time-dependent vector field $X_t$ on $M$.  One then cuts off $X_t$ using a bump function that is identically one on the trace, and vanishes outside $U$.  The time one map of the resulting time-dependent vector field is the desired diffeomorphism $g$.  
\end{proof}

\begin{proof}[Proof of Proposition \ref{open-M-schrier}]
Let $M$ be an open manifold diffeomorphic to the interior of a compact manifold with boundary.  Then $\partial M$ is a compact (possibly disconnected) $n-1$ dimensional manifold, and a neighborhood of the union of ends of $M$ is diffeomorphic to $\partial M \times \R$.    

Let $\{f_n\}$ be a sequence in $\Diff_0(M)$.  
We will use Lemma \ref{isotopy-lem} to write $f_n$ as a product $k_n g_n h_n$, where $k_n$ has compact support, and $g_n$ and $h_n$ are supported in the union of ends of $M$.  Moreover, we will have that $g_n$  is supported in a set $X$ diffeomorphic to $\partial M \times \bigcup_{n > 0} [n+\tfrac{1}{3}, n-\tfrac{1}{3}]$, and $h_n$ is supported in a set $Y$ of the same form.   After this, the proof will proceed much as before, with $X$ and $Y$ playing the roles of the unions of annuli from the $M = \R^n$ case.  

To produce $g_n$ and $h_n$, fix an identification of the complement of a compact set in $M$ with $\R \times \partial M$, and fix isotopies $f^t_n$ from $f_n$ to $\id$.   Imitating notation from the previous proof, for $R > 0$, let $B(R):= (-\infty, R] \times \partial M \subset \R \times \partial M \subset M$.  
We next construct sequences $R^{\pm}_N$, ${R'}^{\pm}_N$.  

Set ${R'}^-_{0}=1$, then choose $R^-_{0}$ large enough so that the ball $B({R'}^-_{0})$ is contained in the interior of $\bigcup_t f_{0}^t(B(R^-_{0}))$.  Now choose $R^+_{0}$ to be any number larger that $R^-_{0}+1$, and finally choose ${R'}^+_{0}$ large enough so that $\bigcup_t f^t_{0}(B(R^+_{0}))$ is contained in the interior of  $B({R'}^+_{0})$.   The construction of  $R^{\pm}_n$ and ${R'}^{\pm}_n$ is by the same inductive procedure as the $\R^n$ case, except that we require $R^-_{N+1}$ to be large enough so that for every $n=0, \dots , N+1$,  the ball $B({R'}^-_{N+1})$ is contained in  the interior of the trace $\bigcup_t f^t_{n}(B(R^-_{N+1}))$, and ${R'}^+_{N+1}$ to be large enough so that for every $n=0, \dots , N+1$, we have $\bigcup_t  f^t_{n}(B(R^+_{N+1}))$ contained in  the interior of $B({R'}^+_{N+1})$.

Let $A_N =  B(R_{N}^+) \setminus \interior B(R_{N}^-)$ and $A'_{N} = B({R'}^+_{N}) \setminus \interior B({R'}^-_{N})$, for $N\geq0$.  
Now Lemma \ref{isotopy-lem} implies that there exists $h_n \in \Diff^r_0(M)$ supported in $X := \cup_N A'_{N}$, and coinciding with $f_{n}$ on a neighborhood of each $A_{N}$ with $N \geq n$.  Fix such an $h_n$.   Let $k_{n}$ agree with $f_{n} h_{n}^{-1}$ on the union of $B(R^-_{n})$ with the compact part (the complement of the ends) of $M$, and be the identity elsewhere.  Define  $g_{n}$ to be the restriction of $f_{n} h_{n}^{-1}$ to the complement of this ball, and the identity elsewhere.  As before, $f_{n} = k_{n} g_{n} h_{n}$, and $g_{n}$ is compactly supported in the disjoint union
$$
Y := \bigcup_{N \geq n} B(R^-_{N+1}) \setminus \interior B(R^+_{N}), 
$$

Following the proof of the $M = \R^n$ case verbatim, but replacing $\mathbb{S}^{d-1}$ with $\partial M$, we conclude that $\{g_n\}$ and $\{h_n\}$ can each be written as words of length $16n+8$  in sets of $6$ elements.  In the special case $M \cong \R^k \times N$, then $\supp(k_n)$ is contained in a set of the form $K_n \times N$, where $K_n$ is a compact set in $\R^k$.  Moreover, in this case, we have $A'_n \cong S^k \times N$.  Analogous to the $\R^n$ case, one can therefore find a diffeomorphism $\phi$ such that $\phi^n(K_n \times N) \subset A'_0 \subset X$.   Thus, the previous argument shows that $k_n$ can be written as a word of length $16n+8$ in a finite set; showing that $\Diff^r_0(M)$ is strongly distorted.  

In the general case, $\supp(k_n)$ is a compact subvariety, but will not typically be conjugate into $X$ or $Y$.  (In fact, $\supp(k_n)$ in general will not be displaceable, i.e. there will be no diffeomorphism $S$ such that $S(\supp(k_n)) \cap \supp(k_n) = \emptyset$, so one cannot hope to imitate the previous proof using Lemma \ref{lemma.avila.sets}.)  However, we can apply Theorem \ref{schrier-compact} to conclude that $\{k_n\}$ is generated by a finite set.  Thus, $\Diff^r_0(M)$ has the Schreier property.  
\end{proof}

\section{Further questions} \label{questions-sec}
We conclude with some natural questions for further study.  

Our argument in the proof of Proposition \ref{homeo-R prop} showed that every countable group in $\Homeo_0(\R)$ is contained in a group generated by 10 elements.   This bound is likely not optimal, but finding the optimal bound seems challenging.   More concretely, we ask
\begin{question} \label{gen-q}
Does there exist a countable set in $\Homeo_0(\R)$ that is not contained in a 2-generated subgroup?
\end{question}
Of course, by Proposition \ref{homeo-R prop}, it suffices to consider sets of cardinality 10.  
We note that the Higman embedding theorem shows that an abstract countable group can be embedded in one generated by two elements, and that Galvin \cite{Galvin} proved that this was also the case within the class of subgroups of the group of permutations of an infinite set.      Perhaps Question \ref{gen-q} is more approachable when $\Homeo_0(\R)$ is replaced by $\Diff_0(\R^n)$.  

It is also natural to ask for other transformation groups that satisfy (or fail to satisfy) strong distortion and Schreier's property.   We mentioned the groups $\Homeo(\mathbb{S}^2, \mathrm{area})$ and  $\Diff^r(\mathbb{S}^2, \mathrm{area})$ in the introduction as natural candidates.  We see no obvious obstruction to satisfying Schreier's property, but our proof tools do not apply here.    

Finally, we reiterate the open problem of strong boundedness for homeomorphism groups of manifolds with finite fundamental group.  The obvious first case is the following.  

\begin{question} 
Is $\Homeo(\R\mathrm{P}^2)$ strongly bounded?  If not, is there a natural, geometrically motivated length function on this group?  
\end{question}


\vspace{.5in}

Fr\'ed\'eric Le Roux 

Institut de Math\'ematiques de Jussieu - Paris Rive Gauche

Universit\'e Marie et Pierre Curie

4 place Jussieu, Case 247, 75252 Paris C\'edex 5

e-mail: frederic.le-roux@imj-prg.fr

\vspace{.3in}
Kathryn Mann

Dept. of Mathematics, Brown University

Box 1917, 
151 Thayer Street,

Providence, RI 02912

e-mail: mann@math.brown.edu


\end{document}